\newtheorem{theorem}{Theorem}[section]
\newtheorem*{theorem A}{Theorem A}
\newtheorem*{theorem B}{N\"olker's Theorem}
\newtheorem{lemma}{Lemma}[section]
\newtheorem{corollary}{Corollary}[section]
\theoremstyle{remark}
\theoremstyle{remark}
\theoremstyle{definition}
\numberwithin{equation}{section}
\def\({\left ( }
\def\){\right )}
\def\<{\left < }
\def\>{\right >}
\begin{document}

\title[Compact gradient Einstein-type manifolds with boundary]{Compact gradient Einstein-type manifolds with boundary and constant scalar curvature}

\author{Xiaomin Chen}
\address{College of  Science, China University of Petroleum-Beijing, Beijing, 102249, China}
\email{xmchen@cup.edu.cn}
\thanks{
The author is supported by Science Foundation of China University of Petroleum-Beijing (No.2462020XKJS02, No.2462020YXZZ004).
 }


\begin{abstract}
Inspired by the study of $V$-static manifold about classification, in this article, we apply the recent results obtained by Freitas and Gomes (Compact gradient Einstein-type manifolds with boundary, 2022) to prove the rigidity results for compact gradient Einstein-type manifolds with nonempty boundary and constant scalar curvature under some suitable pinching conditions. As a special case of gradient Einstein-type manifold, we also give a rigidity result of $(m,\rho)$-quasi-Einstein manifold with boundary.
\end{abstract}

\keywords{gradient Einstein-type manifold; constant scalar curvature; Einstein manifold; radial Weyl curvature; Yamabe constant.}

\subjclass[2010]{ 53C25; 53C20; 53C21}

\maketitle

\section{Introduction}

Let $(M^n,g)$, $n\geq3$, be an $n$-dimensional compact Riemannian manifold. In the setting of manifolds without boundary, we say that $(M^n,g)$ be a gradient Einstein-type manifold or, equivalently, that $(M^n,g)$ supports an Einstein-type structure if there are a smooth $u$ on $M$ and a real constant $\rho\in\mathbb{R}\setminus\{0\}$ satisfying
\begin{equation}\label{1.1*}
   \alpha Ric+\beta\nabla^2u+\mu du\otimes du=(\rho R+\lambda) g,
\end{equation}
where $\alpha,\beta,\mu$ are constants with $(\alpha,\beta,\mu)\neq0$, $\lambda$ is a smooth function and $R$ is the scalar curvature of the metric $g$.  This
structure, introduced by Catino et al.\cite{CMMR}, unifies various particular cases, such as Ricci solitons, $\rho$-Einstein solitons and Yamabe solitons.
In particular, we remark that if $(\alpha,\beta,\mu,\rho)=(1,1,-\frac{1}{m},\rho),\lambda\in\mathbb{R}, 0<m\leq\infty$, namely,
\begin{equation*}
  Ric+\nabla^2u-\frac{1}{m}du\otimes du=(\rho R+\lambda) g,
\end{equation*}
then $(M^n,g)$ is called a \emph{$(m,\rho)$-quasi-Einstein manifold} (see \cite{HW}).

 By considering $f=e^{\frac{\mu}{\beta}u}$, Eq.\eqref{1.1*} is equivalent to
\begin{equation}\label{1.2*}
  \frac{\alpha}{\beta}Ric+\frac{\beta}{\mu f}\nabla^2f=\Lambda g,
\end{equation}
where $\Lambda=\frac{1}{\beta}(\rho R+\lambda)$. Nazareno and Gomes \cite{NG} proved that a nontrivial, compact, gradient Einstein-type manifold of constant scalar curvature with both $\beta$ and $\mu$ nonzero is isometric to the standard sphere $\mathbb{S}^n(r)$. 
In the insights to Eq.\eqref{1.2*}, recently Freitas and Gomes \cite{FG} studied a family of gradient Einstein-type metrics on manifolds with nonempty boundary, namely, there exists a nonconstant smooth function $f$ on $M^n$ satisfying
\begin{equation}\label{1.3}
 \left\{
   \begin{array}{ll}
     &\nabla^2f=\frac{\mu}{\beta}f(\Lambda g-\frac{\alpha}{\beta}Ric)+\gamma g, \\
     &f>0 \quad\hbox{in}\quad {\rm int}(M^n), \\
     &f=0\quad\hbox{on}\quad\partial M,
   \end{array}
 \right.
  \end{equation}
for some smooth function $\Lambda$ and constants $\alpha,\beta,\mu,\gamma$ with $\beta\neq0$. They provided a complete classification for this family of Einstein-type manifolds that are Einstein.
\begin{theorem}{\rm(\cite[Theorem 1,Theorem 2]{FG})}\label{T1}
Let $(M^n,g)$ be a compact gradient Einstein-type manifold with
connected boundary $\partial M$. If $(M^n, g)$ is an Einstein manifold, then it is
isometric to a geodesic ball in a simply connected space form for $\gamma\neq0$ or a hemisphere of a round sphere for $\gamma=0$.
\end{theorem}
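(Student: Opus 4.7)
The plan is to use the Einstein hypothesis to reduce the boundary-value problem \eqref{1.3} to a classical Obata-type Hessian equation, and then apply warped-product rigidity.

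\textbf{Step 1 (Reduction).} Since $(M^n,g)$ is Einstein, $\mathrm{Ric}=\tfrac{R}{n}g$ with $R$ constant. Substituting into \eqref{1.3} collapses the right-hand side to a scalar multiple of $g$, so
\[
\nabla^2 f=\varphi\, g
\]
for some scalar $\varphi$, and tracing gives $\Delta f=n\varphi$.

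\textbf{Step 2 (Obata form).} Taking the divergence of $\nabla^2 f=\varphi g$ and applying the standard identity $\nabla^j\nabla_j\nabla_i f=\nabla_i\Delta f+R_{ij}\nabla^j f$ yields
\[
(n-1)\, d\varphi=-\mathrm{Ric}(\nabla f,\,\cdot\,)=-\tfrac{R}{n}\, df,
\]
so $\varphi=-Kf+\gamma$ with $K:=\tfrac{R}{n(n-1)}$ (the integration constant being fixed by $\varphi|_{\partial M}=\gamma$). Hence $f$ solves
\[
\nabla^2 f+Kf\, g=\gamma\, g,\qquad f>0\text{ in }\mathrm{int}(M),\qquad f|_{\partial M}=0,
\]
which is the classical Obata--Reilly equation on an Einstein manifold.

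\textbf{Step 3 (Warped-product rigidity).} From $\nabla^2 f=(\gamma-Kf)g$ I would derive $\nabla|\nabla f|^2=2(\gamma-Kf)\nabla f$, so $|\nabla f|^2$ depends only on $f$ and the regular level sets of $f$ are totally umbilic. A Tashiro-type argument then writes the metric on the regular locus as $g=dr^2+h(r)^2g_N$ with $f=f(r)$, $f'=c\,h$ for some constant $c\neq0$, and $h''+Kh=0$; smoothness at the unique interior critical point of $f$ together with the Einstein condition on $M$ forces $(N,g_N)$ to be the round unit $(n-1)$-sphere and selects the pole data $h(0)=0$, $h'(0)=1$. Thus $(M,g)$ is isometric to a geodesic ball in the simply connected space form of sectional curvature $K$, i.e., in $\mathbb{S}^n$, $\mathbb{R}^n$, or $\mathbb{H}^n$ according to $\mathrm{sgn}(K)$. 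When $\gamma=0$, the boundary identity $f''(R)=\gamma=0$ forces $h'(R)=0$, which is compatible only with $K>0$ and $R=\pi/(2\sqrt{K})$, so $(M,g)$ is a closed hemisphere.

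\textbf{Main obstacle.} The delicate point is Step 3: one must verify that the warped-product structure extends smoothly across a \emph{single} interior critical point (so that $M$ is genuinely a ball, not a truncation or quotient model) and that the cross-section $(N,g_N)$ is necessarily round, which at $n\geq 4$ does not follow from being Einstein alone. A Reilly-type integration by parts applied to $f$, together with the fact that equality in Reilly's inequality characterises the hemisphere and the geodesic ball, can close both gaps simultaneously and is likely the route taken in \cite{FG}.
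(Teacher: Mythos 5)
This statement is not proved in the paper at all: it is imported verbatim from Freitas--Gomes \cite{FG} (their Theorems 1 and 2), so there is no in-paper argument to compare against. Your reduction is nevertheless correct and is essentially the route taken in \cite{FG}: on an Einstein manifold the structure equation \eqref{1.3} collapses to $\nabla^2 f=\varphi g$, the commutation identity forces $\varphi=-\tfrac{R}{n(n-1)}f+\gamma$ (the constant being read off at $\partial M$, where $f=0$), and one lands on the Obata--Reilly overdetermined system. The ``main obstacle'' you flag in Step 3 is not a real gap: once $f$ is known to be nonconstant (immediate from $f>0$ in the interior and $f=0$ on the nonempty boundary) and its interior maximum is nondegenerate (if $\gamma-Kf$ vanished at the maximum, the ODE along geodesics would force $f$ constant), the classification of $(M,g)$ as a geodesic ball in a space form, respectively a hemisphere when $\gamma=0$, is exactly Reilly's classical theorem, which handles both the single-critical-point issue and the roundness of the cross-section; citing it closes the argument. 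One cosmetic point: in Step 3 you reuse the letter $R$ for the radius of the ball, which collides with the scalar curvature.
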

For the case of manifolds with constant scalar curvature, they just gave some boundary conditions to prove that $(M^n,g)$ is an Einstein manifold (see \cite[Theorem 3, Theorem 4]{FG}).

In this paper, we consider a gradient Einstein-type manifold $(M^n,g)$ with nonempty boundary $\partial M$ such that the metric $g$ satisfies Eq.\eqref{1.3} with constant function $\Lambda$. For the convenience of calculation, we rewrite the satisfied equation as follow:
\begin{equation}\label{1.1}
 \left\{
   \begin{array}{ll}
     &\delta fRic+\nabla^2f=h g, \\
     &f>0 \quad\hbox{in}\quad {\rm int}(M^n), \\
     &f=0\quad\hbox{on}\quad\partial M,
   \end{array}
 \right.
  \end{equation}
where $\delta=\frac{\alpha\mu}{\beta^2}$ and $h=\theta f+\gamma$ with $\theta=\frac{\mu}{\beta}\Lambda$ being constant.

Observe that Eq.\eqref{1.1} is closely related to a $V$-static metric (see \cite{MT,CEM}), thus we first use the zero radial Weyl curvature, i.e. $W(\cdot,\cdot,\cdot,\nabla f)=0$ to classify gradient Einstein-type manifold. Such a condition has been used in $V$-static manifold (\cite[Corollary 1.5, Corollary 1.6]{BR}), quasi-Einstein manifold (\cite{C1}) and gradient Ricci soliton (\cite{PW}). More precisely, we prove the following theorem.
\begin{theorem}\label{T1.2*}
Let $(M^n,g)$, $n\geq4$, be a compact gradient Einstein-type manifold satisfying \eqref{1.1} with constant scalar curvature. For $-1<\delta<-\frac{n-4}{n-2}$, suppose that $M^n$ has zero radial Weyl curvature and
\begin{equation*}
  |\mathring{Ric}|\leq\frac{\delta(n-1)-2}{(n-2)\delta-n-2}\frac{R}{\sqrt{n(n-1)}},
\end{equation*}
where $\mathring{Ric}$ is the traceless Ricci tensor. If $\gamma=0$ then $(M^n,g)$ is isometric to a hemisphere of a round sphere.
\end{theorem}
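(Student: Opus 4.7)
The plan is to use the pinching bound together with the zero radial Weyl hypothesis to force $(M^n,g)$ to be Einstein, after which Theorem~\ref{T1} in the case $\gamma=0$ immediately delivers the conclusion that $(M^n,g)$ is isometric to a hemisphere of a round sphere.

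I would begin by extracting the basic algebraic consequences of \eqref{1.1}. Taking traces gives $\Delta f=nh-\delta fR$, so the traceless piece reads $\delta f\,\mathring{Ric}=-\mathring{\nabla^2 f}$; in particular, proving $\mathring{Ric}\equiv 0$ is the same as showing $\nabla^2 f$ is pure trace. Since $R$ is assumed constant, the contracted second Bianchi identity yields $\mathrm{div}(Ric)=\tfrac{1}{2}dR=0$, and therefore $\mathrm{div}(\mathring{Ric})=0$ as well.

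Next I would derive a weighted Bochner-type identity for $|\mathring{Ric}|^2$. Differentiating \eqref{1.1} and using the Ricci commutation formula
\begin{equation*}
 \nabla_i\nabla_j\nabla_k f-\nabla_j\nabla_i\nabla_k f=R_{ijkl}\nabla^l f,
\end{equation*}
together with the decomposition of $R_{ijkl}$ into its Weyl part and Schouten part, produces a radial Weyl term $W_{ijkl}\nabla^l f$ which is annihilated by hypothesis. What remains is expressible purely in terms of $Ric$, $R$, $f$ and $\nabla f$. After contracting with $\mathring{Ric}$, integrating over $M^n$, and using $f|_{\partial M}=0$ to discard the boundary contributions coming from the divergence pieces, I expect an integral inequality of the form
\begin{equation*}
 \int_M f\,|\mathring{Ric}|^2\Bigl(B(\delta,n)\,R-A(\delta,n)\,|\mathring{Ric}|\Bigr)\leq 0,
\end{equation*}
where the cubic term is controlled by Okumura's algebraic inequality $|\mathrm{tr}(\mathring{Ric}^3)|\leq\tfrac{n-2}{\sqrt{n(n-1)}}|\mathring{Ric}|^3$, and $A(\delta,n),B(\delta,n)$ are explicit polynomials. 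The threshold at which the integrand changes sign should match the stated pinching constant $\frac{\delta(n-1)-2}{(n-2)\delta-n-2}\frac{R}{\sqrt{n(n-1)}}$, so under the hypothesis the integrand is pointwise nonnegative while the integral is nonpositive, forcing $\mathring{Ric}\equiv 0$.

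The main obstacle will be the bookkeeping in the weighted Bochner computation: the coefficients must come out in exactly the form that makes $-1<\delta<-\tfrac{n-4}{n-2}$ the precise regime in which both $A$ and $B$ are positive and their ratio recovers the sharp constant in the statement. The boundary $\partial M$ itself causes no trouble once the divergence structure is in place, since $f|_{\partial M}=0$ absorbs all boundary terms. Once $(M^n,g)$ is Einstein, Theorem~\ref{T1} with $\gamma=0$ concludes the proof.
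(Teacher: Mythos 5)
Your overall skeleton (integrate a weighted identity for $|\mathring{Ric}|^2$, control the cubic term by Okumura, force $\mathring{Ric}\equiv 0$, then invoke Theorem~\ref{T1} with $\gamma=0$) matches the paper, but there is a genuine gap in the central step: the claim that the zero radial Weyl hypothesis annihilates the curvature-decomposition terms so that ``what remains is expressible purely in terms of $Ric$, $R$, $f$ and $\nabla f$'' is not correct. The condition $W(\cdot,\cdot,\cdot,\nabla f)=0$ does \emph{not} kill the quadratic Weyl term $W_{ijkl}\mathring{R}_{ik}\mathring{R}_{jl}$ nor the Cotton terms $C_{ijk}\nabla_jf R_{ik}$ and $f|C|^2$ that inevitably appear in the divergence/Bochner identity (see Lemma~\ref{L2.4}). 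The paper's actual use of the hypothesis is more subtle: it first rewrites $\delta f C_{ijk}$ via the Weyl decomposition and \eqref{2.3*} to obtain \eqref{3.2}, expressing $R_{ik}\nabla_jf C_{ijk}$ as a multiple of $\delta f|C|^2$, and separately differentiates $W_{ijkl}\nabla_kf R_{jl}$ (which vanishes identically under the hypothesis) to obtain \eqref{3.3}, converting $\delta f W_{ijkl}R_{ik}R_{jl}$ into another multiple of $C_{ijk}\nabla_jf R_{ik}$. Only after these substitutions does the integrated identity reduce to a sum of terms whose signs can be checked, and the surviving coefficient $\delta\frac{(n-2)\delta+n-4}{2(1-(n-2)\delta)}$ of $\int_M f|C|^2$ is nonnegative precisely because $\delta<-\frac{n-4}{n-2}$. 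So that endpoint of the $\delta$-range is not, as you suggest, about the ratio of the coefficients $A,B$ in front of $R|\mathring{Ric}|^2$ and $|\mathring{Ric}|^3$; it governs a residual Cotton term your sketch does not account for.

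A second omission: your argument never addresses the terms involving $\theta$ and $h$, nor the sign of $R$. The pinching hypothesis is vacuous unless $R>0$, and the $\theta$-dependent contributions in the integrated identity must be absorbed using $\theta\le\frac{\delta R}{n}$ and $R>0$; both facts come from Lemma~\ref{L2.2*}, whose proof uses $\gamma\ge 0$, the integral identity \eqref{2.8}, and crucially $\delta>-1$ (via $\delta(1+\delta)\le 0$ in \eqref{2.5*}). That is where the other endpoint of the range $-1<\delta<-\frac{n-4}{n-2}$ enters. Without these two ingredients --- the Cotton-tensor identities \eqref{3.2}--\eqref{3.3} and the sign lemma for $\theta$ and $R$ --- the ``bookkeeping'' you defer cannot close, so the proposal as written does not yet constitute a proof.
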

Next, we adapt two different methods to give an important integral formula.
\begin{theorem}\label{T1.3}
Let $(M^n,g)$ be a compact gradient Einstein-type manifold satisfying \eqref{1.1} with  constant scalar curvature. Then the following integral formula holds:
\begin{align*}
0= &\frac{2n\delta+n-4}{2n}\int_Mf|\mathring{Ric}|^2\Delta fdV_{g} - \frac{4\delta}{n-2}\int_Mf^2 \mathring{R}_{ij}\mathring{R}_{ik}\mathring{R}_{jk}dV_{g}+2\delta\int_M f^2W_{ijkl}\mathring{R}_{ik}\mathring{R}_{jl}dV_{g}\nonumber\\
&-\delta\int_M f^2|\nabla\mathring{Ric}|^2dV_{g}+\Big(\frac{2\delta(n-1)-2}{n(n-1)}R-\theta\Big)\delta \int_Mf^2|\mathring{Ric}|^2dV_{g}-\Big[\frac{\delta(n-1)-1}{n^2}R^2 \nonumber\\
&-\theta\frac{n-1}{n}R\Big]\int_M|\nabla f|^2dV_{g}-(1-\delta)\int_M|\mathring{Ric}(\nabla f)|^2dV_{g}+\frac{1}{2}\int_M|\mathring{Ric}|^2|\nabla f|^2dV_{g},\nonumber
\end{align*}
where $W$ is the Weyl tensor.
\end{theorem}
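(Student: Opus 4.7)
The strategy is to combine two chains of integration by parts: one driven by the Bochner formula for the traceless Ricci tensor, the other by divergence manipulations tied to the defining equation \eqref{1.1}. Before applying either method, I rewrite \eqref{1.1} in traceless form. Taking the trace and subtracting gives
\[
\mathring{\nabla}^{2}f := \nabla^{2}f - \tfrac{\Delta f}{n}g = -\delta f\,\mathring{Ric},
\qquad
\Delta f = (n\theta - \delta R)f + n\gamma.
\]
Constant scalar curvature together with the contracted second Bianchi identity yields the divergence-free property $\nabla^{i}\mathring R_{ij}=0$. Applying $\nabla^{i}$ to the tensorial equation and commuting covariant derivatives produces the first-order identity
\[
(1+\delta)\,\mathring R_{ij}\nabla^{j}f \;=\; \tfrac{R((n-1)\delta - 1) - n(n-1)\theta}{n}\,\nabla_{i}f,
\]
which, when $\delta\neq-1$, will be used to convert between integrals of $|\mathring{Ric}(\nabla f)|^{2}$, $\mathring{Ric}(\nabla f,\nabla f)$, and $|\nabla f|^{2}$.

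\emph{First method: Bochner for $\mathring{Ric}$.} Starting from $\tfrac{1}{2}\Delta|\mathring{Ric}|^{2} = |\nabla\mathring{Ric}|^{2} + \langle\mathring{Ric},\Delta\mathring{Ric}\rangle$, I invoke the Lichnerowicz formula, simplified by $\nabla R=0$, to write $\Delta\mathring R_{ij}$ as an explicit combination of $W_{ikjl}\mathring R^{kl}$, the cubic piece $\mathring R_{ik}\mathring R_{j}{}^{k}$, a linear piece $R\,\mathring R_{ij}$, and a pure-trace term. Multiplying by $f^{2}$ and integrating by parts twice over $M$ — all boundary terms drop since $f^{2}|_{\partial M}=0$ — produces a first identity linking $\int f^{2}|\nabla\mathring{Ric}|^{2}$, $\int f^{2}W_{ijkl}\mathring R^{ik}\mathring R^{jl}$, $\int f^{2}\mathring R_{ij}\mathring R_{ik}\mathring R_{jk}$, $\int f^{2}|\mathring{Ric}|^{2}$ and the combination $\int(|\nabla f|^{2} + f\Delta f)|\mathring{Ric}|^{2}$.

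\emph{Second method: divergence identities from the equation.} Apply the divergence theorem to the vector field $Y^{j}=f\,\mathring R^{ij}\nabla_{i}f$: using $\nabla^{i}\mathring R_{ij}=0$ together with the algebraic identity $\mathring R_{ij}\nabla^{i}\nabla^{j}f=-\delta f|\mathring{Ric}|^{2}$ (a direct consequence of $\mathring{\nabla}^{2}f=-\delta f\,\mathring{Ric}$), and observing that the boundary integral vanishes because $f|_{\partial M}=0$, one obtains
\[
\int_{M}\mathring{Ric}(\nabla f,\nabla f)\,dV_{g} \;=\; \delta\int_{M}f^{2}|\mathring{Ric}|^{2}\,dV_{g}.
\]
Running the same game on vector fields that couple $\mathring{Ric}$ tensorially with $\nabla f$, such as $f\,\mathring R^{ij}\mathring R_{i}{}^{k}\nabla_{k}f$, and combining the outcome with the first-order identity of the first paragraph, supplies the terms $\int|\mathring{Ric}(\nabla f)|^{2}$, $\int|\nabla f|^{2}|\mathring{Ric}|^{2}$, and $\int|\nabla f|^{2}$ carrying the needed coefficients.

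\emph{Combination.} Adding the identity from the Bochner method, scaled by $-\delta$ so that $\int f^{2}|\nabla\mathring{Ric}|^{2}$ acquires the advertised coefficient, to the identities produced by the second method, and substituting $\Delta f=(n\theta-\delta R)f+n\gamma$ to trade $\int|\nabla f|^{2}|\mathring{Ric}|^{2}$ for the cleaner $\int f|\mathring{Ric}|^{2}\Delta f$, gives the stated formula after collecting coefficients. The principal obstacle is the Lichnerowicz step in the first method: expanding $\Delta\mathring R_{ij}$ via the Weyl decomposition of the Riemann tensor so that the coefficients of the Weyl, cubic and $R\cdot\mathring{Ric}$ contributions come out exactly as claimed. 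The rest is bookkeeping, and every boundary contribution that appears along the way vanishes automatically because the integrand carries an explicit factor of $f$.
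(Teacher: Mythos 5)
Your overall architecture --- a Bochner-type computation of $\Delta|\mathring{Ric}|^2$ weighted by $f^2$, combined with divergence identities such as $\int_M\mathring{Ric}(\nabla f,\nabla f)\,dV_g=\delta\int_Mf^2|\mathring{Ric}|^2\,dV_g$ (which is correct, and which the paper also exploits implicitly), with all boundary terms killed by $f|_{\partial M}=0$ --- is essentially the paper's second proof. The traceless rewriting of \eqref{1.1} and the first-order identity $(1+\delta)\mathring{R}_{ij}\nabla^jf=\frac{R((n-1)\delta-1)-n(n-1)\theta}{n}\nabla_if$ are also correct (the latter is \eqref{2.3*} in traceless form).

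The gap is in the pivotal step of your first method. There is no ``Lichnerowicz formula'' expressing $\Delta\mathring R_{ij}$, on a general manifold of constant scalar curvature, as an algebraic combination of $W_{ikjl}\mathring R^{kl}$, $\mathring R_{ik}\mathring R_j{}^k$, $R\,\mathring R_{ij}$ and a trace term: $\Delta R_{ij}$ is fourth order in the metric and necessarily carries the non-algebraic piece $\nabla^k\nabla^lW_{ikjl}$ (equivalently $\nabla^kC_{kij}$), which does not vanish here since harmonic curvature is not assumed. If you insert such a formula and integrate against $f^2$, you either lose that term outright or are left with $\int_Mf^2|C|^2$ and $\int_Mf\,\nabla f\ast\mathring{Ric}\ast C$ contributions that do not appear in the stated identity and that you have no mechanism to eliminate. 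The paper avoids this by never invoking a general identity: it differentiates the structure equation \eqref{2.1} a second time and contracts, obtaining an expression for $\delta f\,\Delta R_{jk}$ that unavoidably contains the first-order couplings $\big((1-2\delta)\nabla_lR_{jk}-\nabla_kR_{jl}\big)\nabla_lf$ and the Hessian-of-$h$ terms. Those couplings are exactly what produce, after integration by parts, the coefficients $-(1-\delta)$ of $\int_M|\mathring{Ric}(\nabla f)|^2dV_g$ and $+\tfrac12$ of $\int_M|\mathring{Ric}|^2|\nabla f|^2dV_g$ in the final formula; your sketch assigns these to auxiliary divergence identities in the second method, but those identities alone cannot generate the $\delta$-dependent weights. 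To repair the argument, replace the ``Lichnerowicz formula'' by the second covariant derivative of \eqref{2.1}, contracted as in \eqref{3.5}--\eqref{3.7}, and carry the resulting gradient terms through the integration by parts.
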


Now, we introduce the definition of Yamabe constant on a Riemannian manifold with nonempty boundary. Given a compact $n$-dimensional Riemannian manifold $(M^n,g)$ with boundary $\partial M$, the Yamabe invariant $Y(M,\partial M,[g])$ associated to $(M^n,g)$ is defined by
\begin{align}\label{1.4}
Y(M,\partial M,[g])=\inf_{u\in W^{1,2}(M)}\frac{\int_M\Big(\frac{4(n-1)}{n-2}|\nabla u|^2+Ru^2\Big)dV_{g}+2\int_{\partial M}Hu^2dS_g}{(\int_Mu^\frac{2n}{n-2}dV_g)^\frac{n-2}{n}},
\end{align}
where $[g]$ is the conformal class of the metric $g$ and $H$ is the mean curvature of $\partial M$. For more details, we refer the readers to \cite{E}.
Catino and Baltazar et al. used the Yamabe constant to classify gradient shrinking Ricci soliton and four-dimensional Miao-Tam critical metric, respectively (see \cite{BDR,C}). Here we suppose a similar pinching condition with \cite[Theorem 2]{BDR} to obtain the following conclusion.
\begin{theorem}\label{T1.2}
Let $(M^n,g)$ be an $n$-dimensional {\rm($4\leq n\leq6$)} compact gradient Einstein-type manifold satisfying \eqref{1.1} with constant scalar curvature.
If $-1<\delta<0$ and
\begin{align*}
  \Bigg[\sqrt{\frac{n-1}{8(n-2)}}Y(M,\partial M,[g])-&\Big(\int_M\Big(|W|^2+\frac{8}{n(n-2)}|\mathring{Ric}|^2\Big)^\frac{n}{4}dV_g\Big)^\frac{2}{n}\Bigg]\Phi(M)   \\
   &\geq\frac{\delta-1}{\delta}\sqrt{\frac{(n-1)^3}{4n(n-2)}}\int_M|\mathring{Ric}|^2|\nabla f|^2dV_g,
\end{align*}
where $\Phi(M)=\Big(\int_Mf^\frac{2n}{n-2}|\mathring{Ric}|^\frac{2n}{n-2}dV_g\Big)^\frac{n-2}{n}$, then $(M^n,g)$ is isometric to a geodesic ball in a simply connected space form if $\gamma>0$, or a hemisphere of a round sphere if $\gamma=0$.
\end{theorem}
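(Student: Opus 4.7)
The plan is to combine the integral identity of Theorem \ref{T1.3} with an application of the Yamabe inequality to the test function $u=f|\mathring{Ric}|$. Because $f$ vanishes on $\partial M$, so does $u$, and the boundary term $2\int_{\partial M}Hu^2\,dS_g$ in the Yamabe functional disappears, leaving
\[
Y(M,\partial M,[g])\,\Phi(M)\leq\int_M\Big(\tfrac{4(n-1)}{n-2}|\nabla(f|\mathring{Ric}|)|^2+Rf^2|\mathring{Ric}|^2\Big)\,dV_g.
\]
Expanding $|\nabla(f|\mathring{Ric}|)|^2=|\nabla f|^2|\mathring{Ric}|^2+2f|\mathring{Ric}|\,\nabla f\cdot\nabla|\mathring{Ric}|+f^2|\nabla|\mathring{Ric}||^2$ and invoking the Kato inequality $|\nabla|\mathring{Ric}||^2\leq|\nabla\mathring{Ric}|^2$ converts the right-hand side into a control of $\int_M f^2|\nabla\mathring{Ric}|^2\,dV_g$ by $\Phi(M)$, $\int|\nabla f|^2|\mathring{Ric}|^2\,dV_g$ and the Yamabe constant.

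Next, I would rearrange Theorem \ref{T1.3} to isolate the nonnegative quantity $-\delta\int f^2|\nabla\mathring{Ric}|^2\,dV_g$ (positive since $\delta<0$) on one side. The term $\int_M f|\mathring{Ric}|^2\Delta f\,dV_g$ would be integrated by parts, which is legitimate because $f|_{\partial M}=0$, producing $-\int|\nabla f|^2|\mathring{Ric}|^2\,dV_g-\int f\nabla f\cdot\nabla|\mathring{Ric}|^2\,dV_g$; the mixed gradient term can then be absorbed via Young's inequality with a parameter chosen to match the Kato bound. The two cubic curvature integrals would be treated with the sharp pointwise estimates
\[
|\mathring{R}_{ij}\mathring{R}_{ik}\mathring{R}_{jk}|\leq\tfrac{n-2}{\sqrt{n(n-1)}}|\mathring{Ric}|^3,\qquad |W_{ijkl}\mathring{R}_{ik}\mathring{R}_{jl}|\leq\sqrt{\tfrac{n-2}{2(n-1)}}|W||\mathring{Ric}|^2,
\]
which, after the coefficients $-\tfrac{4\delta}{n-2}$ and $2\delta$ from Theorem \ref{T1.3} are inserted, combine into a single pointwise bound of the form $C(n)\sqrt{|W|^2+\tfrac{8}{n(n-2)}|\mathring{Ric}|^2}\,f^2|\mathring{Ric}|^2$. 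The specific combination $|W|^2+\tfrac{8}{n(n-2)}|\mathring{Ric}|^2$ in the statement is what makes this bound compatible across both cubic contributions.

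Applying Hölder's inequality with conjugate exponents $\tfrac{n}{2}$ and $\tfrac{n}{n-2}$ then replaces that integral by $(\int(|W|^2+\tfrac{8}{n(n-2)}|\mathring{Ric}|^2)^{n/4}\,dV_g)^{2/n}\,\Phi(M)$, producing exactly the first bracket in the hypothesis. The constant scalar curvature condition is used via the trace of \eqref{1.1} to trade $\int Rf^2|\mathring{Ric}|^2\,dV_g$ and $R^2\int|\nabla f|^2\,dV_g$ against each other so that the non-curvature terms cancel. The resulting inequality is the reverse of the one assumed in the theorem, forcing equality throughout and, in particular, $\mathring{Ric}\equiv 0$; hence $(M^n,g)$ is Einstein, and Theorem \ref{T1} supplies the stated isometry.

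The main obstacle is the constant-tracking bookkeeping: after three separate estimates (sharp cubic bound, Hölder, Yamabe) recovering exactly the coefficients $\sqrt{\tfrac{n-1}{8(n-2)}}$ and $\tfrac{\delta-1}{\delta}\sqrt{\tfrac{(n-1)^3}{4n(n-2)}}$ demands a carefully optimized Young parameter when absorbing the cross term $\int f|\mathring{Ric}|\,\nabla f\cdot\nabla|\mathring{Ric}|$, tuned so that the $|\nabla f|^2|\mathring{Ric}|^2$ contribution from Yamabe exactly matches the one generated by the integration by parts. The restriction $4\leq n\leq 6$ enters precisely to ensure that the Hölder exponents $\tfrac{n}{2}$ and $\tfrac{n}{n-2}$ pair correctly with the $L^{2n/(n-2)}$-norm $\Phi(M)$ and that the Kato inequality yields a useful sign.
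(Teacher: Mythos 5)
Your overall strategy is the same as the paper's (the integral identity of Theorem \ref{T1.3}, the Yamabe functional applied to $u=f|\mathring{Ric}|$ together with Kato, pointwise curvature estimates, and H\"older), but three steps as you describe them would not go through. First, the cubic terms: the paper does not estimate $W_{ijkl}\mathring{R}_{ik}\mathring{R}_{jl}$ and $\mathring{R}_{ij}\mathring{R}_{jk}\mathring{R}_{ki}$ separately and then recombine; it uses the single sharp estimate of Catino \cite[Proposition 2.1]{C},
\[
\Big|-W_{ijkl}\mathring{R}_{ik}\mathring{R}_{jl}+\tfrac{2}{n-2}\mathring{R}_{ij}\mathring{R}_{jk}\mathring{R}_{ki}\Big|\le\sqrt{\tfrac{n-2}{2(n-1)}}\Big(|W|^2+\tfrac{8}{n(n-2)}|\mathring{Ric}|^2\Big)^{1/2}|\mathring{Ric}|^2 .
\]
If you instead add the two classical bounds you quote and reassemble the quantity $\bigl(|W|^2+\tfrac{8}{n(n-2)}|\mathring{Ric}|^2\bigr)^{1/2}$ by Cauchy--Schwarz, the resulting constant is $\sqrt{\tfrac{n-2}{n-1}}$, i.e.\ a factor $\sqrt{2}$ worse; you would then prove the rigidity only under a strictly stronger pinching hypothesis, not the stated one. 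Second, your claim that the non-curvature terms ``cancel'' via the trace of \eqref{1.1} is not what happens and cannot happen: after substituting $\Delta f=(-\delta R+n\theta)f+n\gamma$ into Theorem \ref{T1.3} one is left with the terms $\bigl[(\tfrac{8\delta(n-1)-(n-4)^2}{4n(n-1)}R-\theta)\delta+\tfrac{n-4}{2}\theta\bigr]\int_Mf^2|\mathring{Ric}|^2dV_g$ and $\bigl[\theta\tfrac{n-1}{n}R-\tfrac{\delta(n-1)-1}{n^2}R^2\bigr]\int_M|\nabla f|^2dV_g$ (plus $\gamma$-terms), and the whole point of Lemma \ref{L2.2*} is to show these are \emph{nonnegative}: one needs $\tfrac{(n-1)\delta-1}{n(n-1)}R\le\theta\le\tfrac{\delta R}{n}$ and $R>0$ (which is where $\gamma\ge0$ and $-1<\delta<0$ enter), and the coefficient of $\int f^2|\mathring{Ric}|^2$ reduces to $\tfrac{4(n-1)\delta^2+(n^2-2n-4)\delta+2(n-4)}{4n(n-1)}R$, whose positivity on $-1<\delta<0$ is exactly what forces $4\le n\le6$. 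Your attribution of the dimension restriction to the H\"older exponents and the Kato inequality is therefore incorrect; those steps work in every dimension.

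Third, you never address the term $-(1-\delta)\int_M|\mathring{Ric}(\nabla f)|^2dV_g$ in Theorem \ref{T1.3}, yet this is precisely the term that generates the right-hand side $\tfrac{\delta-1}{\delta}\sqrt{\tfrac{(n-1)^3}{4n(n-2)}}\int_M|\mathring{Ric}|^2|\nabla f|^2dV_g$ of the hypothesis, via the estimate $|\mathring{Ric}(\nabla f)|^2\le\tfrac{(n-1)\sqrt{2n}}{2n}|\mathring{Ric}|^2|\nabla f|^2$ from \cite{BDR} (after dividing the final inequality by $-\delta\sqrt{\tfrac{2(n-2)}{n-1}}>0$). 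The Young-parameter tuning you propose for the cross term $\int f|\mathring{Ric}|\,\nabla f\cdot\nabla|\mathring{Ric}|$ is not needed if you use the inequality \eqref{2.19} already recorded in the paper, and it cannot substitute for controlling $|\mathring{Ric}(\nabla f)|^2$. Without these three ingredients the final inequality cannot be matched to the stated pinching condition, so the proposal as written has genuine gaps even though its skeleton is the right one.
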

Since a $(m,\rho)$-quasi-Einstein manifold is a special gradient Einstein-type manifold, from Theorem \ref{T1.2} we obtain a rigidity result of $(m,\rho)$-quasi-Einstein manifolds with nonempty boundary.
\begin{corollary}\label{C1}
Let $(M^n,g)$ be an $n$-dimensional {\rm($4\leq n\leq6$)} compact $(m,\rho)$-quasi-Einstein manifold with nonempty boundary and constant scalar curvature. For $1<m<\infty$, if
\begin{align*}
  \Bigg[\sqrt{\frac{n-1}{8(n-2)}}Y(M,\partial M,[g])-&\Big(\int_M\Big(|W|^2+\frac{8}{n(n-2)}|\mathring{Ric}|^2\Big)^\frac{n}{4}dV_g\Big)^\frac{2}{n}\Bigg]\Phi(M)   \\
   &\geq(m+1)\sqrt{\frac{(n-1)^3}{4n(n-2)}}\int_M|\mathring{Ric}|^2|\nabla f|^2dV_g,
\end{align*}
then $(M^n,g)$ is isometric to a hemisphere of a round sphere.
\end{corollary}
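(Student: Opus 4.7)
The plan is to reduce Corollary \ref{C1} directly to Theorem \ref{T1.2} by verifying that every $(m,\rho)$-quasi-Einstein manifold with nonempty boundary and constant scalar curvature is, after a change of potential, a gradient Einstein-type manifold satisfying \eqref{1.1} with explicit parameters $\delta=-1/m$ and $\gamma=0$. First I would perform the substitution $f=e^{-u/m}$ inside the quasi-Einstein equation $Ric+\nabla^{2}u-\tfrac{1}{m}du\otimes du=(\rho R+\lambda)g$. A short computation using
\[
\nabla^{2}f=-\tfrac{1}{m}f\,\nabla^{2}u+\tfrac{1}{m^{2}}f\,du\otimes du
\]
shows that the cross terms $du\otimes du$ cancel exactly, leaving
\[
-\tfrac{1}{m}fRic+\nabla^{2}f=-\tfrac{\rho R+\lambda}{m}f\,g.
\]
Since $R$ is constant by hypothesis, the right-hand side has the form $(\theta f+\gamma)g$ with $\theta=-(\rho R+\lambda)/m$ and $\gamma=0$; comparing with \eqref{1.1} this identifies $\delta=-1/m$. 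The assumption $1<m<\infty$ then places $\delta$ in the admissible window $-1<\delta<0$ required by Theorem \ref{T1.2}.

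Next I would match the pinching inequality of Corollary \ref{C1} with that of Theorem \ref{T1.2}. All terms involving $Y(M,\partial M,[g])$, $|W|^{2}$, $|\mathring{Ric}|^{2}$, and $\Phi(M)$ are formally identical in the two statements, so only the coefficient multiplying $\int_{M}|\mathring{Ric}|^{2}|\nabla f|^{2}\,dV_{g}$ needs to be checked. Plugging $\delta=-1/m$ into $\tfrac{\delta-1}{\delta}$ yields
\[
\frac{-1/m-1}{-1/m}=m+1,
\]
which is precisely the constant appearing in the corollary. Therefore the hypothesis of Theorem \ref{T1.2} is satisfied verbatim, and its conclusion furnishes a dichotomy between a geodesic ball in a simply connected space form when $\gamma>0$ and a hemisphere of a round sphere when $\gamma=0$. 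Because our reduction has forced $\gamma=0$, only the hemisphere alternative survives, which is the asserted conclusion.

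I do not anticipate a serious obstacle here: the argument is essentially a change of variables together with the observation that the $(m,\rho)$-quasi-Einstein structure always produces $\gamma=0$. The single delicate point is the behaviour of $f=e^{-u/m}$ near $\partial M$, since the boundary condition $f=0$ forces $u\to-\infty$ there. This issue is already implicit in the framework under which $(m,\rho)$-quasi-Einstein manifolds are being treated as a subclass of Einstein-type manifolds with boundary, and it affects neither the algebraic identification of parameters nor the applicability of Theorem \ref{T1.2}.
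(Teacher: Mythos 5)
Your proposal is correct and takes essentially the same route as the paper, which proves the corollary by the same reduction to Theorem \ref{T1.2} via the identification $\delta=-\tfrac{1}{m}$, $\theta=-\tfrac{\rho R+\lambda}{m}$ and $\gamma=0$ (your substitution $f=e^{-u/m}$ merely makes explicit what the paper states in one line, and your coefficient check $\tfrac{\delta-1}{\delta}=m+1$ is exactly the needed verification). The only slip is in your closing side remark: $f=e^{-u/m}\to 0$ forces $u\to+\infty$ on $\partial M$, not $u\to-\infty$; this boundary subtlety is real but is left implicit in the paper as well and does not affect the argument.
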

In order to prove these conclusions, in Section 2 we need review some classical tensors and give some key lemmas, and in Section 3 we will give the proof of our results.
\section{Preliminaries}
In this section we shall collect some fundamental identities and results that will be used in the proof of our results.
Recall that on an $n$-dimensional Riemannian manifold $(M^n,g)$ for $n\geqslant3$, the Weyl tensor and the Cotton tensor are respectively defined by
\begin{align}\label{2.1*}
W_{ijkl}=&R_{ijkl}-\frac{1}{n-2}(R_{ik}g_{jl}+R_{jl}g_{ik}-R_{il}g_{jk}-R_{jk}g_{il})\\
&+\frac{R}{(n-1)(n-2)}(g_{jl}g_{ik}-g_{il}g_{jk})\nonumber
\end{align}
and
\begin{equation}\label{2.2**}
C_{ijk}=\nabla_iR_{jk}-\nabla_jR_{ik}-\frac{1}{2(n-1)}(g_{jk}\nabla_iR-g_{ik}\nabla_jR).
\end{equation}

Notice that $C_{ijk}$ is skew-symmetric in the first two indexes and trac-free in any index, i.e.
\begin{equation*}
  C_{ijk}=-C_{jik}\quad\hbox{and}\quad C_{iik}=C_{iji}=0.
\end{equation*}
When $n\geq4$, the Cotton tensor and Weyl tensor satisfy the following relation:
\begin{equation}\label{2.3**}
  C_{ijk}=-\frac{n-2}{n-3}\nabla_lW_{ijkl}.
\end{equation}
For a tensor $T$, we denote by $$\mathring{T}=T-\frac{trT}{n}g$$ the traceless part of $T$.

\begin{lemma}
Let $(M^n,g)$ be a compact gradient Einstein-type manifold satisfying \eqref{1.1} with constant scalar curvature. Then we have:
\begin{align}
  \delta f(\nabla_iR_{jk}-\nabla_jR_{ik})&=-R_{ijkl}\nabla_lf-\delta(\nabla_ifR_{jk}-\nabla_jfR_{ik})+\nabla_ihg_{jk}-\nabla_jhg_{ik}\label{2.1},\\
  (1+\delta)R_{jl}\nabla_lf&=\delta\nabla_jfR+(1-n)\nabla_jh.\label{2.3*}
\end{align}
\end{lemma}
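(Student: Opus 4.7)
The plan is to derive both identities from a single differentiation of the defining equation $\delta f R_{jk}+\nabla_j\nabla_k f=h g_{jk}$ of \eqref{1.1}. Applying $\nabla_i$ to both sides gives
$$\delta(\nabla_i f)R_{jk}+\delta f\nabla_i R_{jk}+\nabla_i\nabla_j\nabla_k f=(\nabla_i h)g_{jk}.$$
To isolate a Codazzi-type combination of $\nabla R$, I would antisymmetrize this identity in the first two indices $i,j$. The symmetric piece $\nabla_j\nabla_k f$ contributes only through the commutator $\nabla_i\nabla_j\nabla_k f-\nabla_j\nabla_i\nabla_k f$, which by the Ricci identity applied to the $1$-form $\nabla_k f$ collapses to a single curvature term of the form $R_{ijkl}\nabla^l f$ (with the sign fixed to match the convention used in \eqref{2.1*}). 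Rearranging yields exactly \eqref{2.1}.

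For \eqref{2.3*} the natural move is to contract \eqref{2.1} with $g^{ik}$. On the left, the contracted second Bianchi identity $\nabla^k R_{jk}=\tfrac{1}{2}\nabla_j R$ together with the constant scalar curvature hypothesis shows that $g^{ik}(\nabla_i R_{jk}-\nabla_j R_{ik})=\tfrac12\nabla_j R-\nabla_j R=0$, so the whole left-hand side vanishes. On the right, the Riemann term contracts to $-R_{jl}\nabla^l f$ (using $g^{ik}R_{ijkl}=R_{jl}$), the $\delta$-terms contract to $-\delta(R_{jl}\nabla^l f-R\nabla_j f)$, and the $h$-terms contract to $\nabla_j h-n\nabla_j h=(1-n)\nabla_j h$. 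Collecting the coefficient of $R_{jl}\nabla^l f$ and moving the remaining pieces to the other side produces \eqref{2.3*} immediately.

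The only real source of friction is keeping the sign conventions consistent, both for the commutator $[\nabla_i,\nabla_j]\nabla_k f$ and for the contraction $g^{ik}R_{ijkl}=R_{jl}$, so that the curvature term in \eqref{2.1} ends up with the stated sign; beyond that, the derivation is entirely algebraic. It is worth noting that constant scalar curvature is used only in the second half to kill $\nabla_j R$ after the $g^{ik}$-contraction, while \eqref{2.1} itself is valid for any choice of $h$.
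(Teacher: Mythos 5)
Your proposal is correct and follows essentially the same route as the paper: differentiate $\delta fR_{jk}+\nabla_j\nabla_kf=hg_{jk}$, antisymmetrize in $i,j$ using the Ricci identity $\nabla_i\nabla_j\nabla_kf-\nabla_j\nabla_i\nabla_kf=R_{ijkl}\nabla_lf$ to get \eqref{2.1}, then trace over $i=k$ with $\nabla_kR_{jk}=\tfrac12\nabla_jR=0$ to get \eqref{2.3*}. The sign bookkeeping you flag works out with the paper's convention $g^{ik}R_{ijkl}=R_{jl}$, which is the one forced by the Weyl decomposition \eqref{2.1*}.
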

\begin{proof}
Taking the convariant derivative of \eqref{1.1}, we obtain
\begin{equation*}
  \delta (\nabla_ifR_{jk}+f\nabla_iR_{jk})+\nabla_i\nabla_j\nabla_k f=\nabla_ihg_{jk}.
\end{equation*}
Using the formula for the commutation of derivatives and Ricci identity
\begin{equation*}
  \nabla_i\nabla_j\nabla_k f-\nabla_j\nabla_i\nabla_k f=R_{ijkl}\nabla_lf,
\end{equation*}
we get the desired equation \eqref{2.1}.
Moreover, since the scalar curvature $R$ is constant, $\nabla_iR_{ij}=\frac{1}{2}\nabla_jR=0$. Thus letting $i=k$ in \eqref{2.1} and contracting it will give \eqref{2.3*}.
\end{proof}
\begin{lemma}\label{L2.2*}
Let $(M^n,g)$ be a compact gradient Einstein-type manifold satisfying \eqref{1.1} with constant scalar curvature. Then for $-1<\delta<0$ we have
 \begin{equation}\label{2.4*}
  \theta\geq\frac{(n-1)\delta-1}{n(n-1)}R.
 \end{equation}
In addition, if $\gamma\geq0$ then $\theta\leq\frac{\delta R}{n}$ and $R>0$.
\end{lemma}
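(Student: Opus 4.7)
The plan is to reduce the first inequality to a pointwise upper bound on the Ricci eigenvalue $\lambda_0$ associated to $\nabla f$, and then establish that bound by combining two integral identities together with the Cauchy--Schwarz estimate $|Ric|^2 \geq R^2/n$.

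I would start by substituting $\nabla h = \theta \nabla f$ into \eqref{2.3*} and pairing with $\nabla^j f$; since $1+\delta>0$, this produces the pointwise identity $Ric(\nabla f,\nabla f) = \lambda_0 |\nabla f|^2$ with
\[
\lambda_0 := \frac{\delta R - (n-1)\theta}{1+\delta}.
\]
A direct rearrangement (again using $1+\delta>0$) shows that the desired inequality $\theta \geq \frac{(n-1)\delta - 1}{n(n-1)} R$ is equivalent to $\lambda_0 \leq R/n$. Next, tracing \eqref{1.1} yields $\Delta f = \mu_0 f + n\gamma$ with $\mu_0 := n\theta - \delta R$; multiplying by $f$ and integrating by parts using $f|_{\partial M}=0$ gives the first identity
\begin{equation*}
\mu_0 \int_M f^2\, dV_g \;=\; -\int_M |\nabla f|^2\, dV_g - n\gamma \int_M f\, dV_g. \tag{$\star$}
\end{equation*}
For the second identity, I would integrate the relation $Ric(\nabla f,\nabla f)=\lambda_0 |\nabla f|^2$ after rewriting the left-hand side via the divergence of the vector field $f\cdot Ric(\nabla f)$; since $\nabla^i R_{ij} = \tfrac12 \nabla_j R = 0$ and $f|_{\partial M}=0$, the divergence theorem together with the Hessian relation from \eqref{1.1} gives
\begin{equation*}
\lambda_0 \int_M |\nabla f|^2\, dV_g \;=\; -R \int_M fh\, dV_g + \delta \int_M f^2 |Ric|^2\, dV_g. \tag{$\star\star$}
\end{equation*}

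The key step is the Cauchy--Schwarz bound $|Ric|^2 \geq R^2/n$: because $\delta<0$, it reverses to $\delta \int f^2 |Ric|^2 \leq \tfrac{\delta R^2}{n}\int f^2$. Inserting this into $(\star\star)$ and expanding $\int fh = \theta \int f^2 + \gamma \int f$, the right-hand side becomes $-R\gamma\int f -\tfrac{R\mu_0}{n}\int f^2$; using $(\star)$ to replace $\mu_0 \int f^2$ cancels the $\gamma$-terms exactly and leaves $\tfrac{R}{n}\int |\nabla f|^2$. Dividing by $\int |\nabla f|^2 > 0$ (nonzero because $f$ is nonconstant) yields $\lambda_0 \leq R/n$, which is the first inequality.

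For the addendum, if $\gamma \geq 0$ then the right-hand side of $(\star)$ is strictly negative, forcing $\mu_0 < 0$, i.e.\ $\theta < \delta R/n$. The chain $\frac{(n-1)\delta - 1}{n(n-1)} R \leq \theta < \frac{\delta R}{n}$ is then compatible only when $R>0$, since the difference of the two coefficients of $R$ equals $-\frac{1}{n(n-1)} < 0$, so the lower coefficient strictly undercuts the upper coefficient only for positive $R$. The main obstacle I anticipate is spotting the Cauchy--Schwarz bookkeeping: the sign of $\delta$ has to flip $|Ric|^2 \geq R^2/n$ in the correct direction so that $(\star\star)$ combines with $(\star)$ and eliminates all $\gamma$-dependent terms; once that pairing is identified, the remaining steps are purely algebraic.
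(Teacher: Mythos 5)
Your proof is correct and arrives, after unwinding the algebra, at exactly the paper's key identity $-(n-1)\big[\theta-\tfrac{(n-1)\delta-1}{n(n-1)}R\big]\int_M|\nabla f|^2dV_g=\delta(1+\delta)\int_M f^2|\mathring{Ric}|^2dV_g$: the paper gets there by differentiating \eqref{2.3*} once more to obtain the pointwise identity \eqref{2.5*} and then integrating against $f$, while you contract \eqref{2.3*} with $\nabla f$ and integrate ${\rm div}(f\,Ric(\nabla f))$ by parts, which is the same computation in a different order; your Cauchy--Schwarz step $|Ric|^2\ge R^2/n$ is precisely the paper's use of $\delta(1+\delta)|\mathring{Ric}|^2\le 0$, and your identity $(\star)$ is the paper's \eqref{2.8}. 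The treatment of the addendum ($\gamma\ge 0$ forcing $\theta\le\delta R/n$ and then $R>0$ from the gap $-\tfrac{1}{n(n-1)}$ between the two coefficients) also coincides with the paper's.
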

\begin{proof}
Differentiating covariantly \eqref{2.3*} gives
\begin{align*}
  (1+\delta)R_{jl}\nabla_j\nabla_lf=&\delta\Delta fR+(1-n)\Delta h.
\end{align*}
Using \eqref{1.1}, we have $\Delta f=(-\delta R+n\theta)f+n\gamma$ and $\Delta h=\theta\Delta f$, thus the above formula becomes
\begin{align*}
  (1+\delta)(-\delta |Ric|^2+\theta R)f+(1+\delta)\gamma R=&(\delta R+(1-n)\theta)\Delta f\\
=&(\delta R+(1-n)\theta)[(-\delta R+n\theta)f+n\gamma],
\end{align*}
that is,
\begin{equation}\label{2.5*}
  (n-1)\Big[(n\theta-\delta R)f+n\gamma\Big]\Big[\theta-\frac{(n-1)\delta-1}{n(n-1)}R\Big]=\delta(1+\delta)|\mathring{Ric}|^2f.
\end{equation}
Here we have used $|Ric|^2=|\mathring{Ric}|^2+\frac{R^2}{n}$.

As $f$ vanishes on the boundary, we have
\begin{align*}
  0=&\int_Mdiv(f\nabla f)dV_{g}=\int_Mf\Delta fdV_{g}+\int_M|\nabla f|^2dV_{g}\\
=&\int_Mf\Big[(n\theta-\delta R)f+n\gamma\Big]dV_{g}+\int_M|\nabla f|^2dV_{g},
\end{align*}
that is,
\begin{equation}\label{2.8}
\int_M\Big[(n\theta-\delta R)f+n\gamma\Big]fdV_{g}=-\int_M|\nabla f|^2dV_{g}.
\end{equation}
For $-1<\delta<0$, integrating \eqref{2.5*} over $M$ and using \eqref{2.8}, we have
\begin{equation*}
  -(n-1)\Big[\theta-\frac{(n-1)\delta-1}{n(n-1)}R\Big]\int_M|\nabla f|^2dV_{g}=\delta(1+\delta)\int_M|\mathring{Ric}|^2f^2dV_{g}\leq0,
\end{equation*}
which yields \eqref{2.4*}. Furthermore, if $\gamma\geq0$ then from \eqref{2.8} we obtain
\begin{equation*}
  (n\theta-\delta R)\int_Mf^2dV_{g}=-n\gamma\int_MfdV_{g}-\int_M|\nabla f|^2dV_{g}\leq0,
\end{equation*}
that means that $\frac{(n-1)\delta-1}{n(n-1)}R\leq\theta\leq\frac{\delta R}{n}$. This shows $R>0$.
Therefore we complete the proof.
\end{proof}

\begin{lemma}\label{L2.2}
Let $(M^n,g)$ be a compact gradient Einstein-type manifold satisfying \eqref{1.1} with constant scalar curvature. Then we have:
\begin{align*}
\frac{1}{2}div(f\nabla|Ric|^2)=&(1-\delta)\nabla_i(fC_{ijk}R_{jk})-\frac{\delta-1}{2}\langle\nabla f,\nabla|Ric|^2\rangle\\
&-\frac{(n-1)\delta-1}{n}|Ric|^2\Delta f-(\delta+1) f\frac{(n-2)\delta-n}{n-2} R_{ij}R_{ik}R_{jk}\nonumber\\
&-\frac{1-\delta}{2}f|C|^2+f|\nabla Ric|^2-(\delta+1) f\Big[W_{ijkl}R_{ik}R_{jl}\nonumber\\
&-\frac{R^3}{(n-1)(n-2)}-\frac{\delta(n-1)(n-2)-n(2n-1)}{n(n-1)(n-2)}R|Ric|^2\Big]\nonumber\\
&+\Delta h R-\nabla_i\nabla_jhR_{ij}.\nonumber
\end{align*}
\end{lemma}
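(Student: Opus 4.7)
My plan is to derive this pointwise identity through a Bochner-type computation for $\frac{1}{2}\Delta|Ric|^{2}$ combined with repeated applications of the structural equations \eqref{2.1} and \eqref{2.3*}. I will work in a local orthonormal frame and repeatedly use that the scalar curvature is constant, so that $C_{ijk}=\nabla_{i}R_{jk}-\nabla_{j}R_{ik}$ and $\nabla^{l}R_{ijkl}=C_{ijk}$ by the contracted second Bianchi identity.

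First, applying the product rule and the Bochner formula gives
\begin{equation*}
\tfrac{1}{2}\operatorname{div}(f\nabla|Ric|^{2})=\tfrac{1}{2}\langle\nabla f,\nabla|Ric|^{2}\rangle+f|\nabla Ric|^{2}+fR_{ij}\Delta R_{ij}.
\end{equation*}
The term $R_{ij}\Delta R_{ij}$ is then expanded by commuting covariant derivatives in $\nabla^{l}R_{ijkl}=C_{ijk}$; using $\nabla^{i}R_{ij}=0$ one obtains a formula of the shape $\Delta R_{jk}=\nabla_{i}C_{ijk}-R_{mj}R_{mk}+R_{ijkm}R_{im}$, and the resulting curvature contraction $R_{ijkm}R_{im}R_{jk}$ is rewritten via the Weyl decomposition \eqref{2.1*}, producing $W_{ijkl}R_{ik}R_{jl}$, the cubic Ricci term $R_{ij}R_{ik}R_{jk}$, the mixed term $R|Ric|^{2}$, and the $R^{3}/((n-1)(n-2))$ scalar with their correct coefficients.

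Next, I would process the divergence term $(1-\delta)\nabla_{i}(fC_{ijk}R_{jk})$ appearing on the right-hand side. Its expansion produces three pieces: $\nabla_{i}f\,C_{ijk}R_{jk}$, $fR_{jk}\nabla_{i}C_{ijk}$ (which ties back to $R_{ij}\Delta R_{ij}$ computed above), and $fC_{ijk}\nabla_{i}R_{jk}=\tfrac{f}{2}|C|^{2}$, which furnishes the $-\tfrac{1-\delta}{2}f|C|^{2}$ contribution after matching signs with the Bochner side. The first piece is rewritten by contracting \eqref{2.1} (in its Cotton form, valid since $R$ is constant) with $R_{jk}$, which isolates $R_{ijkl}\nabla_{l}f\,R_{jk}$ and $R_{jl}\nabla_{l}f$-type terms; equation \eqref{2.3*} then reduces these to the coefficient $\tfrac{(n-1)\delta-1}{n}|Ric|^{2}\Delta f$ once the trace of \eqref{1.1}, namely $\Delta f=nh-\delta fR$, is substituted. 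The $h$-dependent part of \eqref{2.1} contributes the remaining $\Delta h\,R-\nabla_{i}\nabla_{j}h\,R_{ij}$ terms.

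The main obstacle is the bookkeeping: each use of the Weyl decomposition scatters factors of $\delta+1$, $(n-1)\delta-1$ and $(n-2)\delta-n$ across half a dozen cubic-Ricci, $R|Ric|^{2}$ and $R^{3}$ scalars, and only after all of these contributions are consolidated do the stated coefficients such as $\tfrac{\delta(n-1)(n-2)-n(2n-1)}{n(n-1)(n-2)}$ and $\tfrac{(n-2)\delta-n}{n-2}$ emerge. Since the identity is pointwise, no boundary integrations or Stokes manipulations are needed once the algebra is settled.
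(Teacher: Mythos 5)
Your opening computation is sound: the identity
\begin{equation*}
\tfrac{1}{2}\operatorname{div}(f\nabla|Ric|^{2})=\tfrac{1}{2}\langle\nabla f,\nabla|Ric|^{2}\rangle+f|\nabla Ric|^{2}+fR_{jk}\Delta R_{jk},
\end{equation*}
together with $\Delta R_{jk}=\nabla_{i}C_{ijk}+R_{js}R_{sk}+R_{ijks}R_{is}$ (note the sign of the quadratic Ricci term in the paper's conventions), $fR_{jk}\nabla_{i}C_{ijk}=\nabla_{i}(fC_{ijk}R_{jk})+C_{ijk}\nabla_{j}fR_{ik}-\tfrac{f}{2}|C|^{2}$, and the Weyl decomposition, reproduces exactly \cite[Lemma 3.1]{BR}, i.e.\ the paper's Lemma \ref{L2.3}. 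But that identity carries the coefficient $1$ on $\nabla_{i}(fC_{ijk}R_{jk})$, the coefficient $-\tfrac{1}{2}$ on $f|C|^{2}$, and a residual term $C_{ijk}\nabla_{j}fR_{ik}$, whereas Lemma \ref{L2.2} has $(1-\delta)$, $-\tfrac{1-\delta}{2}$, no Cotton--gradient term, and the extra terms $-\tfrac{(n-1)\delta-1}{n}|Ric|^{2}\Delta f$ and $\Delta h\,R-\nabla_{i}\nabla_{j}h\,R_{ij}$. Bridging that difference requires a \emph{second, independent} pointwise identity, namely
\begin{equation*}
0=-\delta\nabla_{i}(fC_{ijk}R_{jk})+\tfrac{\delta}{2}f|C|^{2}-\tfrac{\delta}{2}\langle\nabla f,\nabla|Ric|^{2}\rangle-C_{ijk}\nabla_{j}fR_{ik}+(\delta+1)h|Ric|^{2}-\delta|Ric|^{2}\Delta f+\Delta h\,R-\nabla_{i}\nabla_{j}h\,R_{ij}+\cdots,
\end{equation*}
and this is precisely what the paper extracts by computing $\nabla_{i}\bigl(-\delta\nabla_{j}fR_{ik}R_{jk}+R_{ijkl}\nabla_{l}fR_{jk}\bigr)$ in two ways (equations \eqref{2.2} and \eqref{2.7*}), then inserting the divergence identity \eqref{2.4}. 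Your plan does not contain this ingredient.

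The mechanism you propose in its place does not work as stated. Contracting \eqref{2.1} with $R_{jk}$ yields the \emph{vector} identity $\delta fC_{ijk}R_{jk}=-R_{ijkl}\nabla_{l}fR_{jk}-\delta\nabla_{i}f|Ric|^{2}+\delta\nabla_{j}fR_{ik}R_{jk}+\nabla_{i}h\,R-\nabla_{j}h\,R_{ij}$; it controls $\delta f\,C_{ijk}R_{jk}$, not the scalar $\nabla_{i}f\,C_{ijk}R_{jk}$. Pairing it with $\nabla_{i}f$ forces a division by $f$ (which vanishes on $\partial M$) and produces the terms $R_{ijkl}\nabla_{i}f\nabla_{l}fR_{jk}$ and $|\mathring{Ric}(\nabla f)|^{2}$, neither of which appears in Lemma \ref{L2.2}; the only workable route is to take its divergence, which is where the coefficient $\delta$ (rather than $1$) attaches to $\nabla_{i}(fC_{ijk}R_{jk})$ and where one must further expand $\nabla_{i}(R_{ijkl}\nabla_{l}fR_{jk})$ using $\nabla_{i}R_{ijkl}=-C_{kjl}$, $\nabla_{i}\nabla_{l}f=-\delta fR_{il}+hg_{il}$, and the evaluation $R_{ijkl}\nabla_{l}f\nabla_{i}R_{jk}=-\tfrac{\delta f}{2}|C|^{2}-\tfrac{\delta}{2}\langle\nabla f,\nabla|Ric|^{2}\rangle+\delta\nabla_{j}fR_{ik}\nabla_{i}R_{jk}$ obtained from a second contraction of \eqref{2.1}. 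None of these steps is in your sketch. Finally, \eqref{2.3*} plays no role here and would only introduce $Ric(\nabla f)$-terms: the coefficient $\tfrac{(n-1)\delta-1}{n}$ comes from combining $-\delta|Ric|^{2}\Delta f$ with $(\delta+1)h|Ric|^{2}$ via the trace $h=\tfrac{1}{n}(\Delta f+\delta fR)$, exactly as you correctly guess at the end of that sentence. So the first half of your argument is right (and is one of the two ingredients the paper combines), but the $\delta$-dependent half of the identity is not derivable by the route you describe.
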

\begin{proof}
Since $R$ is constant, by \eqref{2.2**} and \eqref{2.1} we compute
\begin{align}\label{2.2}
  &\nabla_i(-\delta\nabla_jfR_{ik}R_{jk}+R_{ijkl}\nabla_lfR_{jk}) \\
=&-\delta\nabla_i(\nabla_jfR_{ik}R_{jk})+\nabla_i\Big(-\delta fC_{ijk}R_{jk}\nonumber\\
&-\delta(\nabla_if|Ric|^2-\nabla_jfR_{ik}R_{jk})+\nabla_ihR-\nabla_jhR_{ij}\Big)\nonumber\\
  = & -\delta\nabla_i(fC_{ijk}R_{jk})-\delta\langle\nabla f,\nabla|Ric|^2\rangle-\delta|Ric|^2\Delta f+\Delta h R-\nabla_i\nabla_jhR_{ij}.\nonumber
\end{align}
Here we have used $\nabla_iR_{ij}=\frac{1}{2}\nabla_jR=0$.

At the same time, by \eqref{1.1} we also have
\begin{align}\label{2.3}
  &\nabla_i(-\delta\nabla_jfR_{ik}R_{jk}+R_{ijkl}\nabla_lfR_{jk}) \\
=&-\delta(\nabla_i\nabla_jfR_{ik}R_{jk}+\nabla_jfR_{ik}\nabla_iR_{jk})+\nabla_iR_{ijkl}\nabla_lfR_{jk}\nonumber\\
&+R_{ijkl}\nabla_i\nabla_lfR_{jk}+R_{ijkl}\nabla_lf\nabla_iR_{jk}\nonumber\\
=&-\delta(-\delta fR_{ij}R_{ik}R_{jk}+h|Ric|^2+\nabla_jfR_{ik}\nabla_iR_{jk})+\nabla_iR_{ijkl}\nabla_lfR_{jk}\nonumber\\
&-\delta fR_{ijkl}R_{il}R_{jk}-h|Ric|^2+R_{ijkl}\nabla_lf\nabla_iR_{jk}.\nonumber
\end{align}
From \eqref{2.1*}, using \eqref{2.3**} one can verify
\begin{align*}
  \nabla_iR_{ijkl}= & \nabla_iW_{ijkl}+\frac{1}{n-2}(\nabla_iR_{jl}g_{ik}-\nabla_iR_{jk}g_{il}) \\
  = &-\frac{n-3}{n-2}C_{kjl}+\frac{1}{n-2}(\nabla_kR_{jl}-\nabla_lR_{jk})\\
=&-C_{kjl}
\end{align*}
and from \eqref{2.1} we obtain
\begin{align*}
  R_{ijkl}\nabla_lf\nabla_iR_{jk}= &\Big(-\delta f(\nabla_iR_{jk}-\nabla_jR_{ik}) -\delta(\nabla_ifR_{jk}-\nabla_jfR_{ik})\\
&+\nabla_ihg_{jk}-\nabla_jhg_{ik}\Big)\nabla_iR_{jk} \\
  = &-\delta fC_{ijk}\nabla_iR_{jk}-\frac{\delta}{2}\langle\nabla f,\nabla|Ric|^2\rangle+\delta\nabla_jfR_{ik}\nabla_iR_{jk}\\
=&-\frac{\delta f}{2}|C|^2-\frac{\delta}{2}\langle\nabla f,\nabla|Ric|^2\rangle+\delta\nabla_jfR_{ik}\nabla_iR_{jk}.
\end{align*}
By the skew-symmetric of $C_{ijk}$, substituting the above two equations into \eqref{2.3} implies
\begin{align}\label{2.7*}
  &\nabla_i(-\delta\nabla_jfR_{ik}R_{jk}+R_{ijkl}\nabla_lfR_{jk}) \\
= & \delta f(\delta R_{ij}R_{ik}R_{jk}+R_{ijkl}R_{ik}R_{jl})+C_{ijk}\nabla_jfR_{ik}\nonumber\\
&-\frac{\delta f}{2}|C|^2-\frac{\delta}{2}\langle\nabla f,\nabla|Ric|^2\rangle-(\delta+1)h|Ric|^2.\nonumber
\end{align}

On the other hand, making use of Ricci identity and \eqref{2.2**}, a straightforward calculation gives (see \cite[Eq.(3-3)]{BR})
\begin{align}\label{2.4}
  \frac{1}{2}f|C|^2= & f|\nabla Ric|^2+C_{ijk}\nabla_jfR_{ik}+\frac{1}{2}\langle\nabla f,\nabla|Ric|^2\rangle \\
   &+f(R_{ij}R_{jk}R_{ki}-R_{ijkl}R_{ik}R_{jl})-\nabla_j(f\nabla_iR_{jk}R_{ik}).\nonumber
\end{align}
Thus inserting \eqref{2.4} into \eqref{2.7*}, we conclude
\begin{align}\label{2.13**}
  &\nabla_i(-\delta\nabla_jfR_{ik}R_{jk}+R_{ijkl}\nabla_lfR_{jk}) \\
  = & (\delta+1) f((\delta-1) R_{ij}R_{ik}R_{jk}+R_{ijkl}R_{ik}R_{jl})+\frac{1-\delta}{2}f|C|^2-f|\nabla Ric|^2\nonumber\\
&-\frac{\delta+1}{2}\langle\nabla f,\nabla|Ric|^2\rangle-(\delta+1)h|Ric|^2+\nabla_j(f\nabla_iR_{jk}R_{ik})\nonumber\\
 = & (\delta+1) f((\delta-1) R_{ij}R_{ik}R_{jk}+R_{ijkl}R_{ik}R_{jl})+\frac{1-\delta}{2}f|C|^2-f|\nabla Ric|^2\nonumber\\
&-\frac{\delta+1}{2}\langle\nabla f,\nabla|Ric|^2\rangle-(\delta+1)h|Ric|^2+\nabla_j(fC_{ijk}R_{ik})+\frac{1}{2}div(f\nabla|Ric|^2).\nonumber
\end{align}

Now we combine \eqref{2.13**} and \eqref{2.2} to get
\begin{align}\label{2.7}
\frac{1}{2}div(f\nabla|Ric|^2)=&(1-\delta)\nabla_i(fC_{ijk}R_{jk})-\frac{\delta-1}{2}\langle\nabla f,\nabla|Ric|^2\rangle-\delta|Ric|^2\Delta f\\
 & -(\delta+1) f((\delta-1) R_{ij}R_{ik}R_{jk}+R_{ijkl}R_{ik}R_{jl})\nonumber\\
&-\frac{1-\delta}{2}f|C|^2+f|\nabla Ric|^2+(\delta+1)h|Ric|^2+\Delta h R-\nabla_i\nabla_jhR_{ij}.\nonumber
\end{align}

Contracting equation \eqref{1.1} gives $h=\frac{1}{n}(\Delta f+\delta fR)$, thus \eqref{2.7} becomes
\begin{align*}
\frac{1}{2}div(f\nabla|Ric|^2)=&(1-\delta)\nabla_i(fC_{ijk}R_{jk})-\frac{\delta-1}{2}\langle\nabla f,\nabla|Ric|^2\rangle-\frac{(n-1)\delta-1}{n}|Ric|^2\Delta f\\
 & -(\delta+1) f((\delta-1) R_{ij}R_{ik}R_{jk}+R_{ijkl}R_{ik}R_{jl})-\frac{1-\delta}{2}f|C|^2+f|\nabla Ric|^2\nonumber\\
&+\frac{(\delta+1)\delta fR}{n}|Ric|^2+\Delta h R-\nabla_i\nabla_jhR_{ij}.\nonumber
\end{align*}
Finally, taking account \eqref{2.1*} into the above equation, we get the desired equation.
\end{proof}
Next we remember that Baltazar-Ribeiro.JR obtained the following divergent formula for any Riemannian manifold with constant scalar curvature.
\begin{lemma}{\rm(\cite[Lemma 3.1]{BR})}\label{L2.3}
 Let $(M^n, g)$ be a connected Riemannian manifold with constant scalar
curvature and $f: M\rightarrow \mathbb{R}$ be a smooth function defined on $M.$ Then we have
\begin{align*}
div (f\nabla|Ric|^2)=&-f|C|^2+2f|\nabla Ric|^2+\langle\nabla f,\nabla|Ric|^2\rangle+\frac{2n}{n-2}fR_{ij}R_{ik}R_{jk}\\
&-\frac{4n-2}{(n-1)(n-2)}fR|\mathring{Ric}|^2-\frac{2}{n(n-2)}f R^3\\
&+2\nabla_i(fC_{ijk} R_{jk})+2C_{ijk}\nabla_jfR_{ik}-2f W_{ijkl}R_{ik} R_{jl}.
\end{align*}
\end{lemma}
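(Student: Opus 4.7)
The plan is to establish Lemma~\ref{L2.3} as a pointwise identity, derived from a Bochner-Weitzenb\"ock computation for $|Ric|^{2}$ on a constant-scalar-curvature manifold; the function $f$ enters only through the product rule applied at the end. I would begin by expanding
\[
\mathrm{div}(f\nabla|Ric|^{2}) = f\,\Delta|Ric|^{2} + \langle\nabla f,\nabla|Ric|^{2}\rangle,
\]
so that the main task is to compute $f\,\Delta|Ric|^{2}$. Using $\tfrac{1}{2}\Delta|Ric|^{2} = |\nabla Ric|^{2} + R^{ij}\Delta R_{ij}$ reduces everything to evaluating $R^{ij}\Delta R_{ij}$.

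The heart of the calculation is a Lichnerowicz-type identity for $\Delta R_{ij}$. Because $R$ is constant, the twice-contracted second Bianchi identity gives $\nabla^{i}R_{ij}=0$, and the Cotton tensor \eqref{2.2**} reduces to $C_{kij}=\nabla_{k}R_{ij}-\nabla_{i}R_{kj}$. I would write $\nabla_{k}R_{ij}=\nabla_{i}R_{kj}+C_{kij}$, apply $\nabla^{k}$, and commute covariant derivatives in $\nabla^{k}\nabla_{i}R_{kj}$ using the Riemannian commutation law; the term $\nabla_{i}\nabla^{k}R_{kj}$ vanishes by the contracted Bianchi identity, so the commutator produces an expression for $\Delta R_{ij}$ involving a Ricci-squared contribution $R_{ik}R^{k}{}_{j}$, a Riemann-Ricci contraction $R^{kl}R_{kilj}$, and the Cotton divergence $\nabla^{k}C_{kij}$. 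Contracting with $R^{ij}$ and substituting the Weyl decomposition \eqref{2.1*} into $R^{ij}R^{kl}R_{kilj}$ produces the Weyl-Ricci term $W_{ijkl}R^{ik}R^{jl}$ together with three Schouten contributions; using $|Ric|^{2}=|\mathring{Ric}|^{2}+R^{2}/n$ and collecting constants reorganizes these into the exact coefficients $\tfrac{2n}{n-2}$, $\tfrac{4n-2}{(n-1)(n-2)}$, and $\tfrac{2}{n(n-2)}$ appearing in the statement.

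The Cotton contribution is then handled by integrating by parts once more: writing
\[
R^{ij}\nabla^{k}C_{kij} = \nabla^{k}(R^{ij}C_{kij}) - C_{kij}\nabla^{k}R^{ij}
\]
and invoking the purely algebraic identity
\[
|C|^{2} = 2|\nabla Ric|^{2} - 2(\nabla_{i}R_{jk})(\nabla_{j}R_{ik})
\]
together with constant scalar curvature yields $C_{kij}\nabla^{k}R^{ij}=\tfrac{1}{2}|C|^{2}$; this is precisely where the $-f|C|^{2}$ term in the lemma materializes. Multiplying the pointwise identity by $2f$ and applying the product rule to $f\nabla^{k}(R^{ij}C_{kij})$ produces both the divergence term $2\nabla_{i}(fC_{ijk}R_{jk})$ and the residual $2C_{ijk}\nabla_{j}f\,R_{ik}$ recorded in the lemma, after using the antisymmetry $C_{ijk}=-C_{jik}$ to match the index ordering.

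The main obstacle is disciplined bookkeeping of signs, coefficients, and tensor symmetries. The three Schouten contributions only collapse to the stated coefficients after a careful combination with the Ricci-squared term from the commutator; the Riemann-Ricci, Weyl, and Cotton terms each require the symmetries of $W_{ijkl}$ and $C_{ijk}$ (the latter including the Bianchi-type identity \eqref{2.3**}) to be used consistently, and any misapplied sign convention would corrupt the identity. A sanity check on an Einstein metric, where $C\equiv 0$ and $\Delta R_{ij}=0$, provides a convenient way to fix the signs before collecting the Schouten remainders.
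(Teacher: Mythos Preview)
The paper does not supply its own proof of Lemma~\ref{L2.3}: the statement is quoted verbatim from \cite[Lemma~3.1]{BR} and used as a black box, so there is no in-paper argument to compare against. Your proposal is therefore not in competition with anything the authors wrote; the relevant question is simply whether your outline is correct.

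It is. The scheme you describe---expand $\mathrm{div}(f\nabla|Ric|^{2})=f\Delta|Ric|^{2}+\langle\nabla f,\nabla|Ric|^{2}\rangle$, compute $R^{ij}\Delta R_{ij}$ via the commutation formula and the contracted Bianchi identity (both simplified by $R=\mathrm{const}$), decompose the Riemann--Ricci contraction through \eqref{2.1*}, and then rewrite the Cotton divergence using the product rule together with $C_{kij}\nabla^{k}R^{ij}=\tfrac{1}{2}|C|^{2}$---is exactly the standard Bochner route and reproduces every term with the stated coefficients. Your handling of the Cotton contribution (what you called ``integrating by parts'' is really just the Leibniz rule applied pointwise, which is what the identity requires) correctly generates both $2\nabla_{i}(fC_{ijk}R_{jk})$ and $2C_{ijk}\nabla_{j}f\,R_{ik}$ after one application of the antisymmetry $C_{ijk}=-C_{jik}$. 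The only caveat is terminological: nothing here involves integration, and the identity is pointwise, so you should phrase the Cotton step accordingly. The Einstein sanity check you mention is a good idea for pinning down signs.

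For context, the paper does quote a closely related pointwise identity \eqref{2.4} (also from \cite{BR}) in the course of proving Lemma~\ref{L2.2}; that formula is essentially the same Bochner computation reorganized around $\tfrac{1}{2}f|C|^{2}$ rather than $\mathrm{div}(f\nabla|Ric|^{2})$, and your derivation would recover it as an intermediate step.
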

By Lemma \ref{L2.2} and Lemma \ref{L2.3}, we have
\begin{lemma}\label{L2.4}
Let $(M^n,g)$ be a compact gradient Einstein-type manifold satisfying \eqref{1.1} with constant scalar curvature. Then we have:
\begin{align}\label{2.16*}
\frac{\delta}{2} div(f\nabla|Ric|^2)=&-\frac{\delta(n-1)-1}{n}|Ric|^2\Delta f -\frac{(n-2)\delta-n-2}{n-2}\delta f \mathring{R}_{ij}\mathring{R}_{ik}\mathring{R}_{jk}\\
&+\delta f|\nabla Ric|^2-\frac{2\delta(n-1)-2}{n(n-1)}\delta fR|\mathring{Ric}|^2+\Delta h R-\nabla_i\nabla_jhR_{ij}\nonumber\\
&-(1-\delta)C_{ijk}\nabla_jfR_{ik}-2\delta fW_{ijkl}R_{ik}R_{jl}.\nonumber
\end{align}
\end{lemma}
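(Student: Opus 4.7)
The plan is to form a single linear combination of Lemmas 2.2 and 2.3 that both produces the desired left-hand side $\frac{\delta}{2}\mathrm{div}(f\nabla|Ric|^2)$ and kills every term that does not appear on the right-hand side of \eqref{2.16*}. A short experiment suggests taking Lemma 2.2 once and adding $\frac{\delta-1}{2}$ copies of Lemma 2.3, since this immediately produces $\frac{1}{2}+\frac{\delta-1}{2}=\frac{\delta}{2}$ on the left. On the right, I expect three families of unwanted terms to cancel at once. The $f|C|^2$ contributions drop out because Lemma 2.2 carries $-\frac{1-\delta}{2}f|C|^2$ while $\frac{\delta-1}{2}\cdot(-f|C|^2)=\frac{1-\delta}{2}f|C|^2$; the inner products $\langle\nabla f,\nabla|Ric|^2\rangle$ cancel for the same signed reason; and the divergence $\nabla_i(fC_{ijk}R_{jk})$ disappears because its two coefficients are $1-\delta$ and $\delta-1$.

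What remains is essentially already in the form of the target: the $f|\nabla Ric|^2$ coefficient becomes $1+(\delta-1)=\delta$, the Weyl pairing $W_{ijkl}R_{ik}R_{jl}$ accumulates the coefficient $-(\delta+1)-(\delta-1)=-2\delta$, the term $-\frac{\delta(n-1)-1}{n}|Ric|^2\Delta f$ is inherited untouched from Lemma 2.2, and so are the boundary-type terms $\Delta hR-\nabla_i\nabla_j hR_{ij}$ together with the remaining $(\delta-1)C_{ijk}\nabla_j fR_{ik}=-(1-\delta)C_{ijk}\nabla_j fR_{ik}$ coming from Lemma 2.3. The only structural change still needed is to replace the full Ricci cubic $R_{ij}R_{ik}R_{jk}$ by its traceless counterpart via
\[ R_{ij}R_{ik}R_{jk}=\mathring{R}_{ij}\mathring{R}_{ik}\mathring{R}_{jk}+\tfrac{3R}{n}|\mathring{Ric}|^2+\tfrac{R^3}{n^2}, \]
and similarly to use $|Ric|^2=|\mathring{Ric}|^2+\frac{R^2}{n}$ wherever $|Ric|^2$ appears outside the $\Delta f$ term. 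The Weyl pairing needs no decomposition since $W_{ijkl}R_{ik}R_{jl}=W_{ijkl}\mathring{R}_{ik}\mathring{R}_{jl}$ anyway.

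The one genuinely nontrivial step will be the final polynomial bookkeeping: after the substitutions above, all $fR^3$ contributions should cancel, and all $fR|\mathring{Ric}|^2$ contributions should collapse to $-\frac{2\delta(\delta(n-1)-1)}{n(n-1)}$, matching \eqref{2.16*}. The $R^3$ cancellation reduces to the identity $-(n-1)((n-2)\delta-n)+\delta(n-1)(n-2)-n(2n-1)+n^2=0$, while the $R|\mathring{Ric}|^2$ simplification follows from $(n-1)(n-2)(1-2\delta)+3(n-1)(n+2)-2n(2n-1)=2(n-2)(1-\delta(n-1))$. I expect this purely algebraic verification to be the main obstacle, since the remaining content is just the linear combination of the two preceding lemmas and the standard trace-free decomposition of $Ric$; no new geometric input beyond what was already established in Section~2 is required.
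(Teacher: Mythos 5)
Your proposal is correct and is essentially the paper's own proof: the paper forms exactly the combination $2\cdot(\text{Lemma 2.2})-(1-\delta)\cdot(\text{Lemma 2.3})$ (your $1$ and $\tfrac{\delta-1}{2}$ up to an overall factor of $2$), chosen to eliminate $\nabla_i(fC_{ijk}R_{jk})$, and then substitutes \eqref{2.16} and $|Ric|^2=|\mathring{Ric}|^2+\frac{R^2}{n}$ to reach \eqref{2.16*}. Your claimed cancellations and final coefficients (including the vanishing of the $R^3$ terms and the collapse of the $R|\mathring{Ric}|^2$ coefficient to $-\frac{2\delta(n-1)-2}{n(n-1)}\delta$) all check out.
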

\begin{proof}Since  $|Ric|^2=|\mathring{Ric}|^2+\frac{R^2}{n}$, combining Lemma \ref{L2.2} and Lemma \ref{L2.3} to remove the term $\nabla_i(fC_{ijk} R_{jk})$, we conclude
\begin{align*}
\delta div(f\nabla|Ric|^2)=&-\frac{2(n-1)\delta-2}{n}|Ric|^2\Delta f -\delta f\frac{2(\delta+1)(n-2)-4n}{n-2} R_{ij}R_{ik}R_{jk}\\
&+\frac{2(\delta+1)(n-1)(n-2)-4n(2n-1)}{n(n-1)(n-2)}\delta fR|\mathring{Ric}|^2+2\delta f|\nabla Ric|^2\\
&+2\Delta h R-2\nabla_i\nabla_jhR_{ij}-2(1-\delta)C_{ijk}\nabla_jfR_{ik}\\
&-4\delta fW_{ijkl}R_{ik}R_{jl}+\frac{2(\delta+1)(n-2)-4n}{n^2(n-2)}\delta fR^3.
\end{align*}
Using $Ric=\mathring{Ric}-\frac{R}{n}g$, a direct computation yields
\begin{equation}\label{2.16}
  R_{ij}R_{ik}R_{jk}=\mathring{R}_{ij}\mathring{R}_{ik}\mathring{R}_{jk}+\frac{3}{n}R|\mathring{Ric}|^2+\frac{R^3}{n^2}.
\end{equation}
Substituting \eqref{2.16} into the previous equation and a straightforward calculation, we get the desired equation \eqref{2.16*}.
\end{proof}

\section{Proof of results}
\subsection{Proof of Theorem \ref{T1.2*}}
Since $R$ is constant, in view of \eqref{2.2**}, Equation \eqref{2.1} may be rewritten as
\begin{equation*}
\delta fC_{ijk}=-R_{ijkl}\nabla_lf-\delta(\nabla_ifR_{jk}-\nabla_jfR_{ik})+\nabla_ihg_{jk}-\nabla_jhg_{ik}.
\end{equation*}
Making use of \eqref{2.1*} and \eqref{2.3*}, we thus have
\begin{align*}
  \delta fC_{ijk}=&-W_{ijkl}\nabla_lf-\frac{1}{n-2}(R_{ik}\nabla_{j}f+R_{jl}\nabla_lfg_{ik}-R_{il}\nabla_lfg_{jk}-R_{jk}\nabla_{i}f)\\
&+\frac{R}{(n-1)(n-2)}(\nabla_{j}fg_{ik}-\nabla_{i}fg_{jk})-\delta(\nabla_ifR_{jk}-\nabla_jfR_{ik})\\
&+\nabla_ihg_{jk}-\nabla_jhg_{ik}\nonumber \\
  = &-W_{ijkl}\nabla_lf-\frac{1-(n-2)\delta}{n-2}(R_{ik}\nabla_{j}f-R_{jk}\nabla_{i}f)\nonumber\\
&+\frac{1-(n-2)\delta}{(n-1)(n-2)(1+\delta)}R(\nabla_{j}fg_{ik}-\nabla_{i}fg_{jk})\\
&+\frac{1-(n-2)\delta}{(n-2)(1+\delta)}(\nabla_jhg_{ik}-\nabla_ihg_{jk}). \nonumber
\end{align*}
Hence, by the trace-free in any index of $C_{ijk}$, we have
\begin{align*}
  \delta f|C|^2= & -W_{ijks}\nabla_sfC_{ijk}-\frac{2(1-(n-2)\delta)}{n-2}R_{ik}\nabla_{j}fC_{ijk}.
\end{align*}
If $(M^n,g)$ has zero radial Weyl curvature, namely, $W_{ijks}\nabla_sf=0$, then
\begin{equation}\label{3.2}
\delta f|C|^2=-\frac{2(1-(n-2)\delta)}{n-2}R_{ik}\nabla_{j}fC_{ijk}
\end{equation}
 and it follows from \eqref{2.3**} that
\begin{align*}
  0=&\nabla_i(W_{ijkl}\nabla_kfR_{jl})\\
=&\nabla_iW_{ijkl}\nabla_kfR_{jl}+W_{ijkl}\nabla_i\nabla_kR_{jl}+W_{ijkl}\nabla_kf\nabla_iR_{jl}\\
=&\frac{n-3}{n-2}C_{klj}\nabla_kfR_{jl}-\delta fW_{ijkl}R_{ik}R_{jl},
\end{align*}
that is,
\begin{equation}\label{3.3}
  \delta fW_{ijkl}R_{ik}R_{jl}=-\frac{n-3}{n-2}C_{ijk}\nabla_jfR_{ik}.
\end{equation}

If $\gamma=0$,  we obtain $\Delta f=(-\delta R+n\theta)f$ from \eqref{1.1}. Since $h=\theta f$ with $\theta$ being constant, it is easy to see that $\Delta h=\theta\Delta f$ and $\nabla_i\nabla_jh=\theta\nabla_i\nabla_jf=\theta(-\delta fR_{ij}+hg_{ij})$.
Therefore, by integrating \eqref{2.16*} over $M$, we apply \eqref{3.2} and \eqref{3.3} to achieve
\begin{align*}
0=&\frac{\delta(n-1)-1}{n}\int_M|\mathring{Ric}|^2\Delta fdV_{g} +\frac{(n-2)\delta-n-2}{n-2}\delta\int_M f \mathring{R}_{ij}\mathring{R}_{ik}\mathring{R}_{jk}dV_{g}\\
&-\delta \int_Mf|\nabla Ric|^2dV_{g}+\frac{2\delta(n-1)-2}{n(n-1)}\delta \int_MfR|\mathring{Ric}|^2dV_{g}\\
&-[\theta\frac{n-1}{n}R-\frac{\delta(n-1)-1}{n^2}R^2]\int_M\Delta fdV_{g}-\theta\delta\int_M f|\mathring{Ric}|^2dV_{g}\\
&-\frac{(n-2)\delta+n-4}{n-2}\int_MC_{ijk}\nabla_jfR_{ik}dV_{g}\\
=& \frac{(n-2)\delta-n-2}{n-2}\delta\int_M f \mathring{R}_{ij}\mathring{R}_{ik}\mathring{R}_{jk}dV_{g}-\delta \int_Mf|\nabla Ric|^2dV_{g}\\
&+\Bigg[\Big(-\frac{\delta(n-1)(n-3)-(n-3)}{n(n-1)}R+(n-2)\theta\Big)\delta-\theta\Bigg]\int_M f|\mathring{Ric}|^2dV_{g}\\
&+\delta\frac{(n-2)\delta+n-4}{2(1-(n-2)\delta)} \int_Mf|C|^2dV_{g}.
\end{align*}

Moreover, recall that the classical Okumura's lemma \cite[Lemma 2.1]{O} implies
\begin{equation*}
  \mathring{R}_{ij}\mathring{R}_{ik}\mathring{R}_{jk}\geq-\frac{n-2}{\sqrt{n(n-1)}}|\mathring{Ric}|^3,
\end{equation*}
 and $\theta\leq\frac{\delta R}{n}$ and $R>0$ when $\gamma=0$ (see Lemma \ref{L2.2*}), thus we have
\begin{align*}
0\geq&-\frac{(n-2)\delta-n-2}{\sqrt{n(n-1)}}\delta\int_M f |\mathring{Ric}|^3dV_{g}-\delta \int_Mf|\nabla Ric|^2dV_{g}\\
&+\frac{\delta(n-1)-2}{n(n-1)}R\delta\int_M f|\mathring{Ric}|^2dV_{g}+\delta\frac{(n-2)\delta+n-4}{2(1-(n-2)\delta)} \int_Mf|C|^2dV_{g}\\
=&\int_M\Bigg[\frac{\delta(n-1)-2}{n(n-1)}R\delta-\frac{(n-2)\delta-n-2}{\sqrt{n(n-1)}}\delta|\mathring{Ric}|\Bigg]f|\mathring{Ric}|^2dV_{g}\\
&-\delta \int_Mf|\nabla Ric|^2+\delta\frac{(n-2)\delta+n-4}{2(1-(n-2)\delta)} \int_Mf|C|^2dV_{g}.
\end{align*}
Therefore under the assumption of Theorem \ref{T1.2*}, the above inequality shows $\mathring{Ric}=0$, i.e. $M$ is Einstein. So it suffices to apply Theorem \ref{T1} to conclude that $M$ is isometric to a hemisphere of a round sphere. This complete the proof.

\subsection{Proof of Theorem \ref{T1.3}} Here we shall give two methods to prove the theorem. The first method is followed from the idea of \cite[Theorem 2]{BDR}.
\begin{proof} By Lemma \ref{L2.4}, we have
\begin{align}\label{2.13*}
  \frac{\delta}{2}div(f^2\nabla|\mathring{Ric}|^2)= &\frac{\delta}{2}fdiv(f\nabla|\mathring{Ric}|^2)+\frac{\delta}{2}\langle f\nabla|\mathring{Ric}|^2,\nabla f\rangle \\
  = &f\Big[-\frac{\delta(n-1)-1}{n}|Ric|^2\Delta f -\delta f\frac{(n-2)\delta-n-2}{n-2} \mathring{R}_{ij}\mathring{R}_{ik}\mathring{R}_{jk}\nonumber\\
&+\delta f|\nabla\mathring{Ric}|^2-\frac{2\delta(n-1)-2}{n(n-1)}\delta fR|\mathring{Ric}|^2+\Delta h R-\nabla_i\nabla_jhR_{ij}\nonumber\\
&-(1-\delta)C_{ijk}\nabla_jfR_{ik}-2\delta fW_{ijkl}\mathring{R}_{ik}\mathring{R}_{jl}\Big]+\frac{\delta}{2}\langle f\nabla|\mathring{Ric}|^2,\nabla f\rangle.\nonumber
\end{align}

Noticing that $f$ vanishes on the boundary $\partial M$ and integrating over $M$ by part, one has
\begin{equation*}
  \int_M\langle f\nabla|\mathring{Ric}|^2,\nabla f\rangle dV_{g}=-\int_Mf\Delta f|\mathring{Ric}|^2dV_{g}-\int_M|\mathring{Ric}|^2|\nabla f|^2dV_{g}.
\end{equation*}
Now, integrating \eqref{2.13*} over $M$ and using the above relation, we get
\begin{align*}
  0= &\frac{\delta(n-1)-1}{n}\int_Mf|Ric|^2\Delta fdV_{g} +\delta \frac{(n-2)\delta-n-2}{n-2}\int_Mf^2 \mathring{R}_{ij}\mathring{R}_{ik}\mathring{R}_{jk}dV_{g}\\
&-\delta\int_M f^2|\nabla\mathring{Ric}|^2dV_{g}+\frac{2\delta(n-1)-2}{n(n-1)}\delta \int_Mf^2R|\mathring{Ric}|^2dV_{g}\\
&-\int_Mf\Delta h RdV_{g}+\int_Mf\nabla_i\nabla_jhR_{ij}dV_{g}\\
&+(1-\delta)\int_MfC_{ijk}\nabla_jfR_{ik}dV_{g}+2\delta\int_M f^2W_{ijkl}\mathring{R}_{ik}\mathring{R}_{jl}dV_{g}\\
&+\frac{\delta}{2}\Big(\int_Mf\Delta f|\mathring{Ric}|^2dV_{g}+\int_M|\mathring{Ric}|^2|\nabla f|^2dV_{g}\Big).
\end{align*}

For $h=\theta f+\gamma$, as before we also have that $\Delta h=\theta\Delta f$ and $\nabla_i\nabla_jh=\theta(-\delta fR_{ij}+hg_{ij})$. Substituting this into the previous equation yields
\begin{align}\label{2.13}
  0= &\frac{\delta(3n-2)-2}{2n}\int_Mf|\mathring{Ric}|^2\Delta fdV_{g}+\Big(\frac{\delta(n-1)-1}{n^2}R^2-\theta\frac{n-1}{n}R\Big)\int_Mf\Delta fdV_{g} \\
&+\delta \frac{(n-2)\delta-n-2}{n-2}\int_Mf^2 \mathring{R}_{ij}\mathring{R}_{ik}\mathring{R}_{jk}dV_{g}\nonumber\\
&-\delta\int_M f^2|\nabla\mathring{Ric}|^2dV_{g}+\frac{2\delta(n-1)-2}{n(n-1)}\delta \int_Mf^2R|\mathring{Ric}|^2dV_{g}\nonumber\\
&-\delta\theta\int_M f^2|\mathring{Ric}|^2dV_{g}+(1-\delta)\int_MfC_{ijk}\nabla_jfR_{ik}dV_{g}\nonumber\\
&+2\delta\int_M f^2W_{ijkl}\mathring{R}_{ik}\mathring{R}_{jl}dV_{g}+\frac{\delta}{2}\int_M|\mathring{Ric}|^2|\nabla f|^2dV_{g}.\nonumber
\end{align}
Recalling \eqref{2.2**} and the constancy of $R$, we compute
\begin{align*}
  C_{ijk}\nabla_jfR_{ik}=& (\nabla_iR_{jk}-\nabla_jR_{ik})\nabla_jfR_{ik}\\
=&R_{ik}\nabla_{j}f\nabla_iR_{jk}-\frac{1}{2}\langle\nabla f,\nabla|Ric|^2\rangle\\
=&\mathring{R}_{ik}\nabla_{j}f\nabla_i\mathring{R}_{jk}-\frac{1}{2}\langle\nabla f,\nabla|\mathring{Ric}|^2\rangle.
\end{align*}
Integrating this over $M$ by part, we thus have
\begin{align*}
  \int_Mf C_{ijk}\nabla_jfR_{ik}dV_{g}=& \int_Mf\mathring{R}_{ik}\nabla_{j}f\nabla_i\mathring{R}_{jk}dV_{g}-\int_M\frac{1}{2}f\langle\nabla f,\nabla|\mathring{Ric}|^2\rangle dV_{g}\\
  = &-\int_M|\mathring{Ric}(\nabla f)|^2dV_{g}-\int_Mf\mathring{R}_{ik}\mathring{R}_{jk}\nabla_i\nabla_jfdV_{g}\\
&-\int_M\frac{1}{2}f\langle\nabla f,\nabla|\mathring{Ric}|^2\rangle dV_{g}\\
   = &-\int_M|\mathring{Ric}(\nabla f)|^2dV_{g}+\int_M\delta f
^2\mathring{R}_{ik}\mathring{R}_{jk}\mathring{R}_{ij}dV_{g}\\
&+\frac{n-2}{2n}\int_Mf\Delta f|\mathring{Ric}|^2dV_{g}+\int_M\frac{1}{2}|\nabla f|^2|\mathring{Ric}|^2dV_{g}.
\end{align*}
Inserting this into \eqref{2.13} and taking account \eqref{2.8}, we thus achieve
\begin{align*}\label{}
  0= &\frac{2n\delta+n-4}{2n}\int_Mf|\mathring{Ric}|^2\Delta fdV_{g}+\Big[\frac{\delta(n-1)-1}{n^2}R^2-\theta\frac{n-1}{n}R\Big]\int_Mf\Delta f\\
 &- \frac{4\delta}{n-2}\int_Mf^2 \mathring{R}_{ij}\mathring{R}_{ik}\mathring{R}_{jk}dV_{g}-\delta\int_M f^2|\nabla\mathring{Ric}|^2dV_{g}\\
&+\frac{2\delta(n-1)-2}{n(n-1)}\delta \int_Mf^2R|\mathring{Ric}|^2dV_{g}-\delta\theta\int_M f^2|\mathring{Ric}|^2dV_{g}\\
&-(1-\delta)\int_M|\mathring{Ric}(\nabla f)|^2dV_{g}+\frac{1}{2}\int_M|\nabla f|^2|\mathring{Ric}|^2dV_{g}+2\delta\int_M f^2W_{ijkl}\mathring{R}_{ik}\mathring{R}_{jl}dV_{g}\nonumber\\
= &\frac{2n\delta+n-4}{2n}\int_Mf|\mathring{Ric}|^2\Delta fdV_{g} - \frac{4\delta}{n-2}\int_Mf^2 \mathring{R}_{ij}\mathring{R}_{ik}\mathring{R}_{jk}dV_{g}\\
&+2\delta\int_M f^2W_{ijkl}\mathring{R}_{ik}\mathring{R}_{jl}dV_{g}-\delta\int_M f^2|\nabla\mathring{Ric}|^2dV_{g}\nonumber\\
&+\Big(\frac{2\delta(n-1)-2}{n(n-1)}R-\theta\Big)\delta \int_Mf^2|\mathring{Ric}|^2dV_{g}\nonumber\\
&-\Big[\frac{\delta(n-1)-1}{n^2}R^2 -\theta\frac{n-1}{n}R\Big]\int_M|\nabla f|^2dV_{g}\\
&-(1-\delta)\int_M|\mathring{Ric}(\nabla f)|^2dV_{g}+\frac{1}{2}\int_M|\mathring{Ric}|^2|\nabla f|^2dV_{g}.\nonumber
\end{align*}
Thus the proof is complete.
\end{proof}

\begin{proof}[Another proof of Theorem \ref{T1.3}]
First we take the covariant derivative of \eqref{2.1} to achieve
\begin{align*}
  \delta f(\nabla_t\nabla_iR_{jk}-\nabla_t\nabla_jR_{ik})&=-\delta\nabla_tf(\nabla_iR_{jk}-\nabla_jR_{ik})-\nabla_tR_{ijkl}\nabla_lf-R_{ijkl}\nabla_t\nabla_lf\\
&-\delta(\nabla_t\nabla_ifR_{jk}+\nabla_if\nabla_tR_{jk}-\nabla_t\nabla_jfR_{ik}-\nabla_jf\nabla_tR_{ik})\\
&+\nabla_t\nabla_ihg_{jk}-\nabla_t\nabla_jhg_{ik}.
\end{align*}
Then letting the index $t=i$ and contracting the equation gives
\begin{align}\label{3.5}
  \delta f(\Delta R_{jk}-\nabla_i\nabla_jR_{ik})&=\delta\nabla_if\nabla_jR_{ik}-\nabla_iR_{ijkl}\nabla_lf-R_{ijkl}\nabla_i\nabla_lf\\
&-\delta(\Delta fR_{jk}+2\nabla_if\nabla_iR_{jk}-\nabla_i\nabla_jfR_{ik})+\Delta hg_{jk}-\nabla_k\nabla_jh. \nonumber
\end{align}

From the commutation relations for the second covariant derivative of $R_{ik}$, we have
\begin{equation}\label{3.6}
  \nabla_i\nabla_jR_{ik}=\nabla_j\nabla_iR_{ik}+R_{sk}R_{ijis}+R_{is}R_{ijks}=R_{sk}R_{js}+R_{is}R_{ijks}.
\end{equation}
On the other hand, from the second Bianchi identities we have
\begin{equation}\label{3.7}
  \nabla_jR_{ijkl}\nabla_lf=\nabla_lR_{ik}\nabla_lf-\nabla_kR_{il}\nabla_lf.
\end{equation}
Thus substituting \eqref{3.6} and \eqref{3.7} into \eqref{3.5}, we conclude
\begin{align*}
  \delta f\Delta R_{jk}&=\delta(\nabla_if\nabla_jR_{ik}+fR_{sk}R_{js}+2fR_{is}R_{ijks})\\
&+(\nabla_lR_{jk}-\nabla_kR_{jl})\nabla_lf+(1+\delta)hR_{jk}\\
&-\delta(\Delta fR_{jk}+2\nabla_if\nabla_iR_{jk}+\delta fR_{ij}R_{ik})+\Delta hg_{jk}-\nabla_k\nabla_jh  \\
&=\delta(\nabla_if\nabla_jR_{ik}+2fR_{is}R_{ijks})+((1-2\delta)\nabla_lR_{jk}-\nabla_kR_{jl})\nabla_lf\\
&+\Delta hg_{jk}-\nabla_k\nabla_jh+(\delta-\delta^2)fR_{ij}R_{ik}+[(1+\delta)h-\delta\Delta f]R_{jk}.
\end{align*}
Since $R$ is constant, using the above equation, we compute
\begin{align*}
  &\frac{1}{2}\delta f\Delta|\mathring{Ric}|^2=\frac{1}{2}\delta f\Delta|Ric|^2=\delta fR_{jk}\Delta R_{jk}+\delta f|\nabla\mathring{Ric}|^2\\
=&\delta(\nabla_if\nabla_jR_{ik}R_{jk}+2fR_{jk}R_{is}R_{ijks})+((1-2\delta)\nabla_lR_{jk}R_{jk}-\nabla_kR_{jl}R_{jk})\nabla_lf\\
&+\Delta hR-\nabla_k\nabla_jhR_{jk}+(\delta-\delta^2)fR_{ij}R_{ik}R_{jk}+[(1+\delta)h-\delta\Delta f]|Ric|^2+\delta f|\nabla\mathring{Ric}|^2  \\
=&2\delta fR_{jk}R_{is}R_{ijks}+\frac{1-2\delta}{2}\langle\nabla f,\nabla|Ric|^2\rangle-(1-\delta)\nabla_kR_{jl}R_{jk}\nabla_lf\\
&+\Delta hR-\nabla_k\nabla_jhR_{jk}+(\delta-\delta^2)fR_{ij}R_{ik}R_{jk}+[(1+\delta)h-\delta\Delta f]|Ric|^2+\delta f|\nabla\mathring{Ric}|^2.
\end{align*}
Moreover, recalling \eqref{2.1*} we obtain
\begin{align}\label{3.7*}
\frac{1}{2}\delta f\Delta|\mathring{Ric}|^2=&2\delta f\Big[W_{ijkl}R_{jk}R_{il}-\frac{2n-1}{(n-1)(n-2)}R|Ric|^2+\frac{R^3}{(n-1)(n-2)}\Big]\\
&+\frac{1-2\delta}{2}\langle\nabla f,\nabla|Ric|^2\rangle-(1-\delta)\nabla_kR_{jl}R_{jk}\nabla_lf\nonumber\\
&+\Delta hR-\nabla_k\nabla_jhR_{jk}+\Big(-\delta^2+\frac{(n+2)\delta}{n-2}\Big)fR_{ij}R_{ik}R_{jk}\nonumber\\
&+[(1+\delta)h-\delta\Delta f]|Ric|^2+\delta f|\nabla\mathring{Ric}|^2. \nonumber
\end{align}
Now, as we known, from Eq.\eqref{1.1} one has
\begin{align}\label{3.8}
  h=\frac{\delta Rf+\Delta f}{n} \quad\hbox{and}\quad \nabla_i\nabla_jh=\theta(-\delta fR_{ij}+hg_{ij}).
\end{align}
Hence, by \eqref{2.16} and \eqref{3.8}, Eq.\eqref{3.7*} becomes
\begin{align}\label{3.9}
  \frac{1}{2}\delta f\Delta|\mathring{Ric}|^2=&2\delta fW_{ijkl}\mathring{R}_{jk}\mathring{R}_{il}-\frac{2(n-1)\delta^2-2\delta }{n(n-1)}fR|\mathring{Ric}|^2\\
&+\frac{(1-2\delta)}{2}\langle\nabla f,\nabla|\mathring{Ric}|^2\rangle-(1-\delta)\nabla_k\mathring{R}_{jl}\mathring{R}_{jk}\nabla_lf\nonumber\\
&+\theta\frac{n-1}{n}\Delta fR+\theta\delta f|\mathring{Ric}|^2+\Big(-\delta^2+\frac{(n+2)\delta}{n-2}\Big)f\mathring{R}_{ij}\mathring{R}_{ik}\mathring{R}_{jk}\nonumber\\
&-\frac{(n-1)\delta-1}{n}\Delta f|Ric|^2+\delta f|\nabla\mathring{Ric}|^2. \nonumber
\end{align}

As $f$ vanishes on the boundary $\partial M$, we have
\begin{align}
  \int_Mf\langle\nabla f,\nabla|\mathring{Ric}|^2\rangle dV_{g}=&\frac{1}{2}\int_Mdiv(f^2\nabla|\mathring{Ric}|^2)dV_{g}-\frac{1}{2}\int_Mf^2\Delta|\mathring{Ric}|^2dV_{g}\label{3.10}\\
=&-\frac{1}{2}\int_Mf^2\Delta|\mathring{Ric}|^2dV_{g},\nonumber\\
\int_Mf\nabla_k\mathring{R}_{jl}\mathring{R}_{jk}\nabla_lfdV_{g}=&\int_M\nabla_k(f\mathring{R}_{jl}\mathring{R}_{jk}\nabla_lf)dV_{g}-\int_M|\mathring{Ric}(\nabla f)|^2dV_{g}\label{3.11}\\
&-\int_Mf\mathring{R}_{jl}\mathring{R}_{jk}\nabla_k\nabla_lfdV_{g}\nonumber\\
=&-\int_M|\mathring{Ric}(\nabla f)|^2dV_{g}-\int_Mf\mathring{R}_{jl}\mathring{R}_{jk}\nabla_k\nabla_lfdV_{g}.\nonumber
\end{align}
Multiplying \eqref{3.9} by $f$ and applying \eqref{3.10} and \eqref{3.11}, we deduce
\begin{align*}
  -\frac{1}{2}\int_Mf\langle\nabla f,\nabla|\mathring{Ric}|^2\rangle dV_{g}=&2\delta\int_M f^2W_{ijkl}\mathring{R}_{jk}\mathring{R}_{il}dV_{g}-\frac{2(n-1)\delta^2-2\delta }{n(n-1)}\int_Mf^2R|\mathring{Ric}|^2dV_{g}\\
&+(1-\delta)\Big(\int_M|\mathring{Ric}(\nabla f)|^2+\int_Mf\mathring{R}_{ik}\mathring{R}_{jk}(-\delta fR_{ij}+hg_{ij})\Big)dV_{g}\\
&+\theta\frac{n-1}{n}\int_Mf\Delta fRdV_{g}+\theta\delta\int_M f^2|\mathring{Ric}|^2dV_{g}\\
&+(-\delta^2+\frac{(n+2)\delta}{n-2})\int_Mf^2\mathring{R}_{ij}\mathring{R}_{ik}\mathring{R}_{jk}dV_{g}\\
&-\frac{(n-1)\delta-1}{n}\int_Mf\Delta f|Ric|^2dV_{g}+\delta \int_Mf^2|\nabla\mathring{Ric}|^2dV_{g},  \\
\end{align*}
that is,
\begin{align*}
0=&-2\delta\int_M f^2W_{ijkl}\mathring{R}_{ik}\mathring{R}_{jl}dV_{g}+(1-\delta)\int_M|\mathring{Ric}(\nabla f)|^2dV_{g}\\
&+\Big[\theta\delta-\frac{2(n-1)\delta^2-2\delta }{n(n-1)}R\Big]\int_M f^2|\mathring{Ric}|^2dV_{g}-\frac{1}{2}\int_M|\mathring{Ric}|^2|\nabla f|^2dV_{g}\\
&+\frac{4\delta}{n-2}\int_Mf^2\mathring{R}_{ij}\mathring{R}_{ik}\mathring{R}_{jk}dV_{g}+\frac{4-n-2n\delta}{2n}\int_Mf\Delta f|\mathring{Ric}|^2dV_{g}\\
&+\Big[\theta\frac{n-1}{n}R-\frac{(n-1)\delta-1}{n^2}R^2\Big]\int_Mf\Delta fdV_{g}+\delta \int_Mf^2|\nabla\mathring{Ric}|^2dV_{g}.
\end{align*}
Finally, by integrating by part, we also give the desired integral formula.
\end{proof}

\subsection{Proof of Theorem \ref{T1.2}}
First from \eqref{1.4} we have
    \begin{align}\label{1.2}
      \frac{n-2}{4(n-1)}Y(M,\partial M,[g])\Big(\int_M|u|^{\frac{2n}{n-2}}dV_g\Big)^\frac{n-2}{2}\leq&\int_M|\nabla u|^2dV_g+\frac{n-2}{4(n-1)}\int_MRu^{2}dV_g\\
&+\frac{n-2}{2(n-1)}\int_{\partial M}Hu^2dS_g.\nonumber
    \end{align}
for any $u\in W^{1,2}(M)$.
Using Kato inequality $|\nabla|\mathring{Ric}||^2\leq|\nabla\mathring{Ric}|^2$ and choosing $u=f|\mathring{Ric}|$ in \eqref{1.2}, Baltazar et al. proved the following inequality (see \cite[Eq.(3.16)]{BDR}):
\begin{align}\label{2.19}
  \int_M f^2|\nabla\mathring{Ric}|^2dV_{g}\geq& \frac{n-2}{4(n-1)}Y(M,\partial M,[g])\Big(\int_Mf^\frac{2n}{n-2}|\mathring{Ric}|^\frac{2n}{n-2}dV_{g}\Big)^\frac{n-2}{n}\\
&-\frac{(n-2)R}{4(n-1)}\int_Mf^2|\mathring{Ric}|^2dV_{g}+\int_Mf\Delta f|\mathring{Ric}|^2dV_{g}.\nonumber
\end{align}
Meanwhile, a straightforward computation gives (see \cite[Eq.(3.19)]{BDR})
\begin{equation}\label{2.20}
  |\mathring{Ric}(\nabla f)|^2\leq\frac{(n-1)\sqrt{2n}}{2n}|\mathring{Ric}|^2|\nabla f|^2.
\end{equation}

We remark that on every $n$-dimensional Riemannian manifold the following estimate holds (see \cite[Proposition 2.1]{C}):
\begin{equation*}
  \Big|-W_{ijkl}\mathring{R}_{ik}\mathring{R}_{jl}+\frac{2}{n-2}\mathring{R}_{ij}\mathring{R}_{jk}\mathring{R}_{ki}\Big|
\leq\sqrt{\frac{n-2}{2(n-1)}}\Big(|W|^2+\frac{8}{n(n-2)}|\mathring{Ric}|^2\Big)^\frac{1}{2}|\mathring{Ric}|^2.
\end{equation*}
As $\delta<0$, we have
\begin{equation}\label{2.21}
  -\delta W_{ijkl}\mathring{R}_{ik}\mathring{R}_{jl}+\frac{2\delta}{n-2}\mathring{R}_{ij}\mathring{R}_{jk}\mathring{R}_{ki}
\geq\delta\sqrt{\frac{n-2}{2(n-1)}}\Big(|W|^2+\frac{8}{n(n-2)}|\mathring{Ric}|^2\Big)^\frac{1}{2}|\mathring{Ric}|^2.
\end{equation}

Since $\Delta f=(-\delta R+n\theta)f+n\gamma$ from \eqref{1.1},
taking account \eqref{2.19}, \eqref{2.20} and \eqref{2.21} into Theorem \ref{T1.3} and using H\"{o}lder inequality, we follow
\begin{align}\label{2.22}
0\geq &\Bigg[\delta \sqrt{\frac{2(n-2)}{n-1}}\Big(\int_M\Big(|W|^2+\frac{8}{n(n-2)}|\mathring{Ric}|^2\Big)^\frac{n}{4}dV_{g}\Big)^\frac{2}{n}\\
&-\delta\frac{n-2}{4(n-1)}Y(M,\partial M,[g])\Bigg]\Big(\int_Mf^\frac{2n}{n-2}|\mathring{Ric}|^\frac{2n}{n-2}dV_{g}\Big)^\frac{n-2}{n}\nonumber\\
&+\Big[\Big(\frac{8\delta(n-1)-(n-4)^2}{4n(n-1)}R-\theta\Big)\delta+\frac{n-4}{2}\theta\Big] \int_Mf^2|\mathring{Ric}|^2dV_{g}\nonumber\\
&+\frac{n-4}{2}\gamma\int_Mf|\mathring{Ric}|^2dV_{g}+\Big[\theta\frac{n-1}{n}R-\frac{\delta(n-1)-1}{n^2}R^2\Big]\int_M|\nabla f|^2dV_{g} \nonumber\\
&+\Big[\theta\frac{n-1}{n}R-\frac{\delta(n-1)-1}{n^2}R^2\Big]n\gamma\int_MfdV_{g}\nonumber\\
&+\Big(\frac{1}{2}-(1-\delta)\frac{(n-1)\sqrt{2n}}{2n}\Big)\int_M|\mathring{Ric}|^2|\nabla f|^2dV_{g}.\nonumber
\end{align}
Now,  since $-1<\delta<0$, by \eqref{2.4*} we have
\begin{align*}
  &\Big(\frac{8\delta(n-1)-(n-4)^2}{4n(n-1)}R-\theta\Big)\delta+\frac{n-4}{2}\theta  \\
  \geq & \Big(\frac{8\delta(n-1)-(n-4)^2}{4n(n-1)}-\frac{(n-1)\delta-1}{n(n-1)}\Big)R\delta+\frac{(n-4)((n-1)\delta-1)}{2n(n-1)}R\\
=&\frac{4(n-1)\delta^2+(n^2-2n-4)\delta+2(n-4)}{4n(n-1)}R>0\quad\hbox{when}\;4\leq n\leq6
\end{align*}
and
\begin{align*}
  &\theta\frac{n-1}{n}R-\frac{\delta(n-1)-1}{n^2}R^2\geq0.
\end{align*}
 Hence, under the assumptions of Theorem \ref{T1.2}, from the inequality \eqref{2.22} we have $\mathring{Ric}\equiv0$, i.e. $(M^n,g)$ is an Einstein manifold.
Finally, we obtain the desired conclusion in view of Theorem \ref{T1}.
\subsection{Proof of Corollary \ref{C1}} For a $(m,\rho)$-quasi-Einstein manifold, it corresponds to the case where $\delta=-\frac{1}{m}$, $\theta=-\frac{\rho R+\lambda}{m}$ and $\gamma=0$, thus when $m>1$ and $R$ is constant,  we have $-1<\delta<0$ and $\theta$ is constant. Hence $(M^n,g)$ is isometric to a hemisphere of a round sphere by Theorem \ref{T1.2}.


\section*{Acknowledgement}
The author thanks to China Scholarship Council for supporting him to visit University of Turin as a scholar and expresses his gratitude to Professor Luigi Vezzoni and Department of Mathematics for their hospitality.

%
%

%
%
%

---------------------------------------------------------------
\end{document}